\documentclass[11pt]{elsarticle}
\usepackage{amsmath,amssymb,amsthm}
\usepackage{mathrsfs}
\usepackage{graphicx}
\usepackage{tikz}
\usepackage{enumerate}
\usepackage{float}
\usepackage[normalem]{ulem}
\usetikzlibrary{positioning}
\usepackage{tikz-cd}

\usetikzlibrary{calc}
\theoremstyle{plain}

\theoremstyle{definition}
\newtheorem{theorem}{Theorem}[section]
\newtheorem{lemma}[theorem]{Lemma}
\newtheorem{proposition}[theorem]{Proposition}
\newtheorem{definition}[theorem]{Definition}
\newtheorem{remark}[theorem]{Remark}
\newtheorem{corollary}[theorem]{Corollary}
\newtheorem{example}[theorem]{Example}

\makeatletter

\newcommand{\id}{\mathrm{id}}

\newcommand{\Rmnum}[1]{\expandafter@slowromancap\romannumeral #1@}
\makeatother
\makeatletter
\def\ps@pprintTitle{%
  \let\@oddhead\@empty
  \let\@evenhead\@empty
  \def\@oddfoot{\reset@font\hfil\thepage\hfil}
  \let\@evenfoot\@oddfoot
}\makeatother

\begin{document}
\begin{frontmatter}

\title{On $\mathcal{U}_c$-admitting spaces and $\mathcal{U}_c$-completions of $T_0$ spaces\tnoteref{t1}}
\tnotetext[t1]{Supported by NSF of China (No.12231007).}

\author[1]{Kangming Lu}
\ead{481187071@qq.com}

\author[1]{Qingguo Li\corref{a1}}
\ead{liqingguoli@aliyun.com}
\author[2]{Shengwen Wang}
\ead{wangshw7201@163.com}
\author[1]{Rongqi Xiao}
\ead{rongqixiao2024@163.com}
\cortext[a1]{Corresponding author.}
\address[1]{School of Mathematics, Hunan University, Changsha, Hunan, 410082, China}
\address[2]{School of Mathematics and Statistics, Liupanshui Normal University, Guizhou 553004,China}

\begin{abstract}
In this paper, we introduce a new class of spaces, termed $\mathcal{U}_c$-admitting spaces, and investigate their relationships with $\mathcal{U}_s$-admitting spaces and well-filtered spaces. Adopting the perspective of H-sobriety, we show that every \(\mathcal{U}_c\)-admitting space is H-sober, so that $\mathcal{U}_c$-admitting spaces arise as a special case of H-sober spaces.
 Consequently, the category \(\mathbf{U_c\text{-}Sob}\) of $\mathcal{U}_c$-admitting spaces and continuous maps is complete and forms a reflective subcategory of \(\mathbf{Top}_0\). In addition, we prove that a \(T_0\) space $X$ is \(\mathcal{U}_c\)-admitting if and only if its Smyth power space is \(\mathcal{U}_c\)-admitting. Finally, we present the standard \(\mathcal{U}_c\)-completion of an arbitrary \(T_0\) space.

\end{abstract}

\begin{keyword}
well-filtered spaces, \(\mathcal{U}_c\)-admitting spaces, Smyth power spaces, completions\\[0.5ex]
\text{MSC (2020):} 54D99, 54B20, 06B35, 06F30.
\end{keyword}

\end{frontmatter}
\section{Introduction}
Domain theory was introduced by Dana Scott in the late 1960s as a mathematical foundation for the denotational semantics of programming languages. It provides a framework for studying directed-complete partial orders, i.e., partially ordered sets in which every directed subset has a supremum, and has since become a central area of research in theoretical computer science and non-Hausdorff topology. Among its many topological manifestations, \(d\)-spaces, well-filtered spaces and sober spaces are recognized as three of the most important classes, each capturing a distinct aspect of approximation and convergence \cite{goubault2013,gierz2003,XuZhao2021}.

The relationships among these three classes are well understood: every sober space is well-filtered, and every well-filtered space is a \(d\)-space; neither implication can be reversed in general \cite{Shen2019}. In recent years, much effort has been devoted to refining this hierarchy and to understanding the exact boundary between these classes. In particular, Heckmann introduced the notion of \(\mathcal{U}_s\)-admitting spaces \cite{Heckmann1992}, and Shen later observed that they lie strictly between \(d\)-spaces and well-filtered spaces \cite{Shen2025}. Moreover, Shen proved that the category of \(\mathcal{U}_s\)-admitting spaces with continuous maps is a reflective subcategory of \(\mathbf{Top}_0\), and established several equivalent characterizations of these spaces.

However, the existing framework leaves a natural question open: can the gap between \(\mathcal{U}_s\)-admitting spaces and well-filtered spaces be further refined? In this paper, we give a positive answer by introducing a new class of spaces, which we call \(\mathcal{U}_c\)-admitting spaces. The definition of \(\mathcal{U}_c\)-admitting spaces is motivated by a natural weakening of the compactness condition in the definition of well-filtered spaces: while well-filteredness requires the existence of a compact saturated set inside every open neighbourhood, \(\mathcal{U}_c\)-admittingness only requires a weakly strongly compact saturated set. This modification, though seemingly minor, gives rise to a rich theory.

In domain theory, various completions of \(T_0\) spaces have been extensively studied and occupy a central position. These range from the classical sobrification, to the more recent well-filterification, \(D\)-completion, and \(\mathcal{U}_s\)-completion \cite{gierz2003,Shen2019,Li2017,Xu2020,Keimel2009}. In \cite{Miao2024}, sufficient conditions for the coincidence of some of these completions were provided. A common feature of these constructions is that each completion is determined by a corresponding family of subsets: the \(D\)-completion is determined by tapered sets, the well-filterification by well-filtered determined sets, and the \(\mathcal{U}_s\)-completion by \(\mathcal{U}_s\)-admitting determined sets. In this paper, we introduce the notion of \(\mathcal{U}_c\)-admitting determined sets, and use it to construct the \(\mathcal{U}_c\)-completion of arbitrary \(T_0\) spaces.

The main results of this paper are as follows.

\begin{enumerate}
    \item We establish the strict inclusions
    \[
    \text{well-filtered} \subsetneq \mathcal{U}_c\text{-admitting} \subsetneq \mathcal{U}_s\text{-admitting},
    \]
    thereby refining the known hierarchy between \(d\)-spaces and well-filtered spaces.
    
    \item We prove that in a countable \(T_0\) space, \(\mathcal{U}_c\)-admitting coincides with well-filteredness, and that \(\mathcal{U}_c\)-admitting together with local countable compactness (or together with first-countability) implies sobriety.
    
    \item Adopting the H-sober framework of Xu, we show that \(\mathcal{U}_c\)-sets form a class of \(H\)-sets; and consequently, the category \(\mathbf{U}_c\text{-Sob}\) of \(\mathcal{U}_c\)-admitting spaces is complete and reflective in \(\mathbf{Top}_0\). We also prove that a \(T_0\) space is \(\mathcal{U}_c\)-admitting if and only if its Smyth power space is \(\mathcal{U}_c\)-admitting.
    
    \item Finally, we construct the standard \(\mathcal{U}_c\)-completion of a \(T_0\) space, which provides a universal way to embed any \(T_0\) space into a \(\mathcal{U}_c\)-admitting space.
\end{enumerate}

The paper is organized as follows. Section 2 recalls the necessary preliminaries, including \(d\)-spaces, well-filtered spaces, sober spaces, $\mathcal{U}_s$-admitting spaces and the Smyth power space. Section 3 introduces \(\mathcal{U}_c\)-sets and \(\mathcal{U}_c\)-admitting spaces, and investigates their basic properties and characterizations. Section 4 places \(\mathcal{U}_c\)-admitting spaces in the H-sober framework and establishes their categorical properties. Section 5 constructs the \(\mathcal{U}_c\)-completion of a \(T_0\) space.

\section{Preliminaries}

We introduce some basic concepts and results to be used later. For more details, we refer the readers to \cite{gierz2003,goubault2013}.

Let \( P \) be a poset. For any subset \( A \) of a poset \( P \), we use the standard notation
\[
\mathord{\uparrow} A = \{ y \in P : \exists x \in A, x \leq y \}; \quad 
\mathord{\downarrow} A = \{ y \in P : \exists x \in A, y \leq x \}.
\]

For each \( x \in X \), we write \( \mathord{\uparrow} x = \mathord{\uparrow} \{ x \} \) and \( \mathord{\downarrow} x = \mathord{\downarrow} \{ x \} \). 
A subset \( A \) of \( P \) is called a lower (resp., upper) set if \( A = \mathord{\downarrow} A \) (resp., \( A = \mathord{\uparrow} A \)).

A nonempty subset \( D \) of \( P \) is directed if every two elements in \( D \) have an upper bound in \( D \). \( P \) is called a directed complete poset, or a dcpo for short, if for any directed subset \( D \subseteq P \), the supremum \( \bigvee D \) of \( D \) exists. 

A subset \( U \) of \( P \) is \textit{Scott open} if (i) \( U = \mathord{\uparrow} U \) and (ii) for any directed subset \( D \) of \( P \) for which \( \bigvee D \) exists, \( \bigvee D \in U \) implies \( D \cap U \neq \emptyset \). All Scott open subsets of \( P \) form a topology, called the \textit{Scott topology} on \( P \), denoted by \( \sigma(P) \). The space \( \Sigma P = (P, \sigma(P)) \) is called the \textit{Scott space} of \( P \).

Let \( X \) be a \( T_0 \) space. A subset \( A \) of \( X \) is called saturated if \( A \) equals the intersection of all open sets containing it. The specialization order \( \leq \) on \( X \) is defined by \( x \leq y \) if and only if \( x \in \overline{\{y\}}= \mathord{\downarrow} y \), where $\overline{\{y\}}$ is the closure of $\{y\}$. It is important to note that a subset \( A \) of \( X \) is saturated if and only if \( A = \mathord{\uparrow} A \) with respect to the specialization order.

A set $A$ is called compact if for every open cover $\{U_i\}_{i \in I}$ of $A$ (i.e., $A \subseteq \bigcup_{i \in I} U_i$ where each $U_i$ is open), there exists a finite subset $J \subseteq I$ such that $A \subseteq \bigcup_{i \in J} U_i$. The closed subset of a compact set is also compact, and the continuous image of a compact set is also compact. A topological space $X$ is called $A_1$ (or first-countable) if every point $x$ has a countable neighborhood base; it is called locally compact if for every point $x$ and every open neighborhood $U$ of $x$, there exists a compact set $K$ such that $x \in \operatorname{int}(K) \subseteq K \subseteq U$; and it is called core compact if its open set lattice $\mathcal{O}(X)$ is continuous. Every locally compact space is core compact \cite{goubault2013}.

For a \( T_0 \) space \( X \), \( A \subseteq X \), and \( \mathcal{F} \subseteq 2^X \), we shall use the following notations.
\begin{itemize}
    \item \( \mathcal{O}(X) \), the family of all open subsets of \( X \);
    \item \( \mathcal{S}(X) \),  the family of all singleton set;
    \item \( \mathcal{C}(X) \), the family of all closed subsets of \( X \);
    \item \( \mathcal{Q}(X) \), the family of all compact saturated subsets of \( X \);
    \item \( \mathcal{Q}^*(X) \), \( \mathcal{Q}(X) \setminus \{\emptyset\} \);
    \item \( \mathcal{M}(\mathcal{F}) \), the family \( \{C \in \mathcal{C}(X) : \forall F \in \mathcal{F}, C \cap F \neq \emptyset\} \);
    \item \( m(\mathcal{F}) \), the family of all minimal members in \( (\mathcal{M}(\mathcal{F}), \subseteq) \).
\end{itemize}

\begin{definition}(\cite{gierz2003})
Let \( X \) be a \( T_0 \) space. A nonempty subset \( A \) of \( X \) is called \emph{irreducible} if for any closed sets \( F_1, F_2 \) of \( X \), \( A \subseteq F_1 \cup F_2 \) implies \( A \subseteq F_1 \) or \( A \subseteq F_2 \). The space \( X \) is called \emph{sober} if every irreducible closed subset of \( X \) is the closure of a (unique) point.
\end{definition}

\begin{lemma}(\cite[section2.4]{Heckmann2013})
Let \( X \) be a \( T_0 \) space and \( Y \) a subspace of \( X \). The following statements are equivalent for a subset \( A \subseteq Y \):
\begin{enumerate}
    \item \( A \) is an irreducible subset of \( Y \).
    \item \( A \) is an irreducible subset of \( X \).
    \item \( \overline{A} \) is an irreducible subset of \( X \).
\end{enumerate}
\end{lemma}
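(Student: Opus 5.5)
The plan is to prove the cycle of implications (1) $\Rightarrow$ (2) $\Rightarrow$ (3) $\Rightarrow$ (1). Two elementary facts about the subspace topology do all the work. First, a subset of $Y$ is closed in $Y$ exactly when it has the form $F\cap Y$ with $F$ closed in $X$. Second, for $A\subseteq Y$, the closure of $A$ in $Y$ equals $\overline{A}\cap Y$, where $\overline{A}$ denotes the closure in $X$. Nonemptiness is preserved trivially in every direction, since $A\neq\emptyset$ if and only if $\overline{A}\neq\emptyset$.

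For (1) $\Rightarrow$ (2), suppose $A\subseteq F_1\cup F_2$ with $F_1,F_2$ closed in $X$. Since $A\subseteq Y$, we get $A\subseteq (F_1\cap Y)\cup(F_2\cap Y)$. Both sets on the right are closed in $Y$, so irreducibility of $A$ in $Y$ gives $A\subseteq F_i\cap Y\subseteq F_i$ for some $i$.

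For (2) $\Rightarrow$ (3), suppose $\overline{A}\subseteq F_1\cup F_2$ with $F_1,F_2$ closed in $X$. Then $A\subseteq F_1\cup F_2$, so by (2) we have $A\subseteq F_i$ for some $i$. Because $F_i$ is closed, this gives $\overline{A}\subseteq F_i$.

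For (3) $\Rightarrow$ (1), suppose $A\subseteq G_1\cup G_2$ with $G_1,G_2$ closed in $Y$. Write $G_j=F_j\cap Y$ with $F_j$ closed in $X$. Then $A\subseteq F_1\cup F_2$, and since $F_1\cup F_2$ is closed in $X$, also $\overline{A}\subseteq F_1\cup F_2$. Irreducibility of $\overline{A}$ gives $\overline{A}\subseteq F_i$ for some $i$. Hence $A\subseteq F_i\cap Y=G_i$.

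There is no genuine obstacle in this argument. The only point needing care is the direction (3) $\Rightarrow$ (1), where closed sets of $Y$ must be lifted to closed sets of $X$ before irreducibility of $\overline{A}$ can be applied. The $T_0$ hypothesis is not used at any stage.
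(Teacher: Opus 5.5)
Your proof is correct. Each implication follows directly from the definition, with closed sets of $Y$ written as traces $F\cap Y$ of closed sets of $X$ and nonemptiness handled properly; the fact that the closure of $A$ in $Y$ equals $\overline{A}\cap Y$ is listed but never needed. The paper gives no proof of its own and only cites Heckmann and Keimel, and your argument is the standard elementary verification of this fact.
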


\begin{definition}(\cite{gierz2003,Shen2019})
A \( T_0 \) space \( X \) is called well-filtered if for any filtered family \( \mathcal{F} \) of compact saturated sets in \( X \) and \( U \in \mathcal{O}(X) \), \( \bigcap \mathcal{F} \subseteq U \) implies \( K \subseteq U \) for some \( K \in \mathcal{F} \).

A set $A$ is called a KF-set if there exists filtered family \( \mathcal{F} \) of compact saturated sets such that \( \overline{A} \in m(\mathcal{F})\). $X$ being well-filtered is equivalent to  the statement that the closure of every KF-set  is the closure of a (unique) point.
\end{definition}

\begin{definition}(\cite{gierz2003})
A \( T_0 \) space \( X \) is called a \( d \)-space if \( X \) is a dcpo and every open subset of \( X \) is Scott open in the specialization order. And $X$ being a $d$-space is equivalent to  the closure of every directed set is the closure of a (unique) point.
\end{definition}

\begin{proposition}(\cite[Corollary2.4, Lemma2.9]{Shen2019})
Let $X$ be a \(T_0\) space and $A$ be a subset of $X$. Then $A$ is a directed subset (with respect
to the specialization order ) implying that A is a KF-set; and A is a KF-set, implying that A is an
irreducible subset. Let \(\mathcal{D}(X)\) denote the family of all directed sets, \(\mathcal{KF}(X)\) the family of all KF sets, and \(\mathcal{IRR}(X)\) the family of all irreducible subsets of $X$.Then
\[
\mathcal{D}(X) \subseteq \mathcal{KF}(X) \subseteq \mathcal{IRR}(X)
\]
\[
\text{soberiety} \Longrightarrow \text{well-filteredness} \Longrightarrow d\text{-space}
\]
\end{proposition}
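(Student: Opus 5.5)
The plan is to prove the two inclusions $\mathcal{D}(X)\subseteq\mathcal{KF}(X)$ and $\mathcal{KF}(X)\subseteq\mathcal{IRR}(X)$ directly. The two implications between the space classes will then follow from the point-closure characterizations recalled in the definitions of sobriety, well-filteredness and $d$-spaces.

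\emph{Directed sets are KF-sets.} Let $D$ be directed and put $\mathcal{F}=\{\mathord{\uparrow} d : d\in D\}$.
\begin{itemize}
\item Each $\mathord{\uparrow} d$ is compact and saturated: any open set containing $d$ contains $\mathord{\uparrow} d$.
\item $\mathcal{F}$ is filtered, because $D$ is directed: if $e\ge d_1,d_2$ in $D$, then $\mathord{\uparrow} e\subseteq \mathord{\uparrow} d_1\cap\mathord{\uparrow} d_2$.
\item $\overline{D}\in\mathcal{M}(\mathcal{F})$, since $d\in \overline{D}\cap\mathord{\uparrow} d$ for every $d\in D$.
\item $\overline{D}$ is minimal. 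Suppose $C\subseteq\overline{D}$ is closed and meets every $\mathord{\uparrow} d$. Then for each $d$ there is $c\ge d$ with $c\in C$. Since $C$ is a lower set, $d\in C$. Hence $D\subseteq C$, so $\overline{D}\subseteq C$.
\end{itemize}
Therefore $\overline{D}\in m(\mathcal{F})$ and $D$ is a KF-set.

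\emph{KF-sets are irreducible.} Let $\overline{A}\in m(\mathcal{F})$ with $\mathcal{F}$ a filtered family in $\mathcal{Q}(X)$.
\begin{itemize}
\item $A$ is nonempty. If $A$ were empty, then $\overline{A}=\emptyset$ would have to meet every member of $\mathcal{F}$, which is impossible when $\mathcal{F}\neq\emptyset$. (If $\mathcal{F}$ is empty, the minimal member of $\mathcal{M}(\mathcal{F})$ is $\emptyset$; I would note that filtered families are nonempty by convention.)
\item Suppose $A\subseteq F_1\cup F_2$ with $F_i$ closed, and assume $A\not\subseteq F_1$ and $A\not\subseteq F_2$. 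Then $\overline{A}\cap F_i$ is a proper closed subset of $\overline{A}$ for each $i$. By minimality, there are $K_1,K_2\in\mathcal{F}$ with $K_i\cap\overline{A}\cap F_i=\emptyset$.
\item Filteredness gives $K\in\mathcal{F}$ with $K\subseteq K_1\cap K_2$.
\item Then $K\cap\overline{A}\subseteq K\cap (F_1\cup F_2)$, since $\overline{A}\subseteq F_1\cup F_2$. This set is empty, contradicting $\overline{A}\in\mathcal{M}(\mathcal{F})$.
\end{itemize}
Therefore $A$ is irreducible. This step is where filteredness is genuinely used, and it is the main point of the argument; compactness is not needed for it.

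\emph{The implications between space classes.} Sobriety means every irreducible closed set is a point closure. A KF-set $A$ is irreducible, so $\overline{A}$ is irreducible by the lemma on irreducibility of closures, hence $\overline{A}=\overline{\{x\}}$. By the recalled characterization, $X$ is then well-filtered. Similarly, if $X$ is well-filtered, every directed set is a KF-set, so its closure is a point closure, and $X$ is a $d$-space.

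The only subtle point is the convention that filtered families are nonempty, which the nonemptiness argument above relies on. Everything else is routine.
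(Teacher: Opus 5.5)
Your proof is correct and follows essentially the standard argument behind the result the paper cites from \cite{Shen2019}; the paper gives no proof of its own. The ingredients are the same: the filtered family $\{\mathord{\uparrow} d : d\in D\}$ shows a directed set is a KF-set, irreducibility of KF-sets is the fact (recorded in the paper after the topological Rudin lemma) that minimal members of $\mathcal{M}(\mathcal{F})$ are irreducible, and the space-class implications are read off from the point-closure characterizations. One small thing to tidy in the last step of your irreducibility argument: the set that is empty is $K\cap\overline{A}=(K\cap\overline{A}\cap F_1)\cup(K\cap\overline{A}\cap F_2)\subseteq(K_1\cap\overline{A}\cap F_1)\cup(K_2\cap\overline{A}\cap F_2)=\emptyset$, not $K\cap(F_1\cup F_2)$ as your wording suggests.
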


 We will use the following result, derived from the topological Rudin lemma (see Lemma 3.1 in \cite{Heckmann2013}).
  
\begin{lemma}
  
Let \( X \) be a \( T_0 \) space, \( C \) a closed subset of \( X \) and \( \mathcal{F} \) a filtered family of compact saturated sets in \( X \). Every closed set \( C \in \mathcal{M}(\mathcal{F}) \) contains a subset \( A \in m(\mathcal{F}) \). Every member in \( m(\mathcal{F}) \) is irreducible.
\end{lemma}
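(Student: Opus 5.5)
The plan is to prove the two assertions of the lemma separately, both by Zorn's lemma and a compactness argument; this is the topological Rudin lemma of Heckmann--Keimel.

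\emph{Existence of a minimal member.} Fix \(C \in \mathcal{M}(\mathcal{F})\) and consider \(\mathcal{P} = \{B \in \mathcal{M}(\mathcal{F}) : B \subseteq C\}\), ordered by reverse inclusion. This family is nonempty since it contains \(C\). Take a chain \(\{B_j\}_{j\in J}\) in \(\mathcal{P}\) and put \(B=\bigcap_j B_j\), which is closed and contained in \(C\). We must show \(B\cap K\neq\emptyset\) for every \(K\in\mathcal{F}\). Suppose instead that \(B\cap K=\emptyset\) for some \(K\). Then \(K\subseteq X\setminus B=\bigcup_j (X\setminus B_j)\), and this union is directed because the \(B_j\) form a chain. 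Compactness of \(K\) gives a single \(j\) with \(K\subseteq X\setminus B_j\), contradicting \(B_j\in\mathcal{M}(\mathcal{F})\). Hence \(B\in\mathcal{P}\), and Zorn's lemma yields a minimal element \(A\) of \(\mathcal{P}\). Any \(A'\in\mathcal{M}(\mathcal{F})\) with \(A'\subseteq A\) also lies in \(\mathcal{P}\), so \(A'=A\). Thus \(A\in m(\mathcal{F})\). This step only uses compactness of each member, not filteredness.

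\emph{Irreducibility.} Let \(A\in m(\mathcal{F})\). First, \(A\) is nonempty: if \(\mathcal{F}\neq\emptyset\) this follows from \(A\cap K\neq\emptyset\); the degenerate case of an empty family we either exclude by convention or handle separately. Now suppose \(A\subseteq F_1\cup F_2\) with \(F_1,F_2\) closed, and that \(A\not\subseteq F_1\) and \(A\not\subseteq F_2\). Then \(A\cap F_1\) and \(A\cap F_2\) are closed proper subsets of \(A\). By minimality, neither lies in \(\mathcal{M}(\mathcal{F})\), so there are \(K_1,K_2\in\mathcal{F}\) with \(A\cap F_i\cap K_i=\emptyset\) for \(i=1,2\). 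Since \(\mathcal{F}\) is filtered, there is \(K_3\in\mathcal{F}\) with \(K_3\subseteq K_1\cap K_2\). Then
\[A\cap K_3\subseteq (A\cap F_1\cap K_1)\cup(A\cap F_2\cap K_2)=\emptyset,\]
which contradicts \(A\in\mathcal{M}(\mathcal{F})\). Hence \(A\) is irreducible.

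The only delicate point is the chain step in the Zorn argument: we must pass from a directed union of open sets covering \(K\) to a single member. This works because a finite subcover of a chain-indexed family is absorbed by its largest member, which is exactly where compactness of the members of \(\mathcal{F}\) is used. The irreducibility step is where filteredness is essential.
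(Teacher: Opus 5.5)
Your proof is correct. It is the standard argument behind the Heckmann--Keimel topological Rudin lemma, which the paper cites rather than proves: Zorn's lemma with compactness to bound chains, then minimality plus filteredness (playing the role of irreducibility of \(\mathcal{F}\) in the Smyth power space) to get irreducibility.

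There are two small touch-ups. First, apply Zorn only to nonempty chains; the empty chain is bounded by \(C\) itself. Second, the case \(\mathcal{F}=\emptyset\) cannot be ``handled separately'': there \(m(\mathcal{F})=\{\emptyset\}\) and the irreducibility claim is false. So nonemptiness must be built into ``filtered'', as it is under the paper's convention that directed sets are nonempty.
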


\begin{definition}\label{a3}(\cite{goubault2013})
A \emph{retract} of a topological space $Y$ is a  topological space $X$ such that there are two continuous maps $s: X\rightarrow Y$(section) and $r:Y\rightarrow X$(retraction) such that $r\circ s=\id_X $. 
\end{definition}

The following result is well-known (see \cite{gierz2003,Kou2001,Lawson2020,Xu2020}).

\begin{lemma}
For a \( T_0 \) space \( X \), the following four conditions are equivalent:
\begin{enumerate}
    \item \( X \) is locally compact and sober.
    \item \( X \) is locally compact and well-filtered.
    \item \( X \) is core compact and sober.
    \item \(X \) is core compact and well-filtered.
\end{enumerate}
\end{lemma}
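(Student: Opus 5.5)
The statement is the four-way equivalence: locally compact and sober, locally compact and well-filtered, core compact and sober, core compact and well-filtered. I would prove it by the cycle (1)$\Rightarrow$(2)$\Rightarrow$(4)$\Rightarrow$(3)$\Rightarrow$(1).

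The implication (1)$\Rightarrow$(2) holds because every sober space is well-filtered, by the Proposition quoted from \cite{Shen2019}. The implication (2)$\Rightarrow$(4) holds because every locally compact space is core compact. So all the real work is in (4)$\Rightarrow$(3) and (3)$\Rightarrow$(1).

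For (3)$\Rightarrow$(1), I would use the Hofmann--Mislove theorem, valid in sober spaces: Scott open filters of $\mathcal{O}(X)$ correspond to compact saturated sets, with $K$ corresponding to $\{U : K\subseteq U\}$.
\begin{itemize}
\item Let $x\in U$ be open. Core compactness means $\mathcal{O}(X)$ is continuous, so we can interpolate $U\gg V_1\gg V_2\gg\cdots\ni x$.
\item Since $\ll$ interpolates, this gives a Scott open filter $\mathcal{F}=\{W : V_n\ll W \text{ for some } n\}$, or more simply $\uparrow$ of the chain.
\item Put $K=\bigcap\mathcal{F}$. By Hofmann--Mislove, $K$ is compact saturated, and $W\in\mathcal{F}$ if and only if $K\subseteq W$.
\item Then $x\in V_1\subseteq K$, since $K\supseteq\bigcap_n V_n$ and each $V_n\ni x$ with $V_{n+1}\subseteq V_n$, so $V_{n+1}\subseteq\bigcap\mathcal{F}$, i.e. $V_1\subseteq K$.
\item Also $U\in\mathcal{F}$ gives $K\subseteq U$.
\end{itemize}
Hence $x\in\operatorname{int}K\subseteq K\subseteq U$, and $X$ is locally compact.

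For (4)$\Rightarrow$(3), take an irreducible closed set $A$ and aim to show $A=\overline{\{x\}}$ for some $x$.
\begin{itemize}
\item Define $\mathcal{F}_A=\{U\in\mathcal{O}(X): U\cap A\neq\emptyset\}$. Irreducibility of $A$ makes $\mathcal{F}_A$ a filter, and it is clearly an upper set.
\item It is Scott open: if $\bigcup_i U_i$ meets $A$, some $U_i$ does.
\item For each $U\in\mathcal{F}_A$, use continuity of $\mathcal{O}(X)$ and Scott openness to pick $V\ll U$ with $V\in\mathcal{F}_A$.
\item The idea is then to produce compact saturated sets $K$ with $V\subseteq K\subseteq U$ that form a filtered family all meeting $A$.
\end{itemize}

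The main obstacle is that core compactness alone does not supply compact sets between $V$ and $U$. To overcome it I would adapt the approach of \cite{Lawson2020,Xu2020}. Take a sequence $U\gg V_1\gg V_2\gg\cdots$ with every $V_n\in\mathcal{F}_A$, and set $\mathcal{G}=\{W:\exists n,\ V_n\subseteq W\}$. Then $Q=\bigcap_n V_n$ is compact in the sense that any open cover of $Q$ contains some $V_n$; this uses a Rudin/well-filtered argument, with well-filteredness applied to suitable saturations. Specifically, I would show that $\mathord{\uparrow}(Q\cap A)$, or $Q$ itself, is compact saturated and meets $A$. The tool for this is the topological Rudin lemma stated above (minimal closed sets meeting a filtered family are irreducible), combined with well-filteredness.

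Once we have a filtered family $\{K_U\}$ of compact saturated sets, each meeting $A$ and with $\bigcap K_U\subseteq\bigcap\mathcal{F}_A$-neighbourhoods, well-filteredness gives that $A\cap\bigcap K_U\neq\emptyset$. Otherwise $\bigcap K_U\subseteq X\setminus A$, so some $K_U\subseteq X\setminus A$, a contradiction. So pick $x\in A\cap\bigcap_U K_U$. Every open $U$ meeting $A$ contains $K_U\ni x$, so $A\subseteq\overline{\{x\}}$, and since $x\in A$ we get $A=\overline{\{x\}}$. This gives sobriety.

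I expect constructing the filtered compact family in the core compact case to be the hard step. I would follow Xu's argument in \cite{Xu2020} that core compact well-filtered spaces are locally compact, which reduces (4) to (2), and then apply the argument above.
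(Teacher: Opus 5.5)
Your overall route is the standard one, and (1)$\Rightarrow$(2)$\Rightarrow$(4) are fine, but the step (3)$\Rightarrow$(1) fails as written. You interpolate downward from $U$, $U\gg V_1\gg V_2\gg\cdots$, with $x\in V_n$ for all $n$. The filter generated by this chain has $K=\bigcap\mathcal{F}=\bigcap_n V_n\subseteq V_1$, not $V_1\subseteq K$. So $x$ need not be an interior point of $K$: in $\mathbb{R}$, shrinking intervals around $x$ with $\overline{V_{n+1}}\subseteq V_n$ form such a chain and give $K=\{x\}$.

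The interpolation has to run between a fixed neighbourhood and $U$. First pick $V\ll U$ with $x\in V$, which is possible since $U$ is the directed union of the open sets way below it. Then interpolate repeatedly to get $V\ll\cdots\ll W_3\ll W_2\ll W_1\ll U$. Now $\mathord{\uparrow}\{W_n:n\in\mathbb{N}\}$ is a Scott open filter of $\mathcal{O}(X)$: if $W_n\subseteq\bigcup_i O_i$ with the union directed, then $W_{n+1}\ll W_n$ puts $W_{n+1}$ inside some $O_i$. Hofmann--Mislove then makes $K=\bigcap_n W_n$ compact saturated, and $x\in V\subseteq\operatorname{int}K\subseteq K\subseteq W_1\subseteq U$.

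The more serious gap is (4)$\Rightarrow$(3), which your sketch does not prove. The claim that $Q=\bigcap_n V_n$ is compact (i.e.\ $Q\subseteq O$ forces $V_n\subseteq O$ for some $n$) is the whole content of the theorem that core compact well-filtered spaces are sober. The argument you point to cannot supply it, because well-filteredness only concerns filtered families of compact saturated sets, and the $V_n$ are open. Suppose $Q\subseteq O$ but no $V_n\subseteq O$. The Zorn/Rudin argument (using $V_{n+1}\ll V_n$ to pass to intersections of chains) then yields only a minimal closed set $C_0\subseteq X\setminus O$ meeting every $V_n$. This $C_0$ is irreducible, and a contradiction arises precisely when $C_0=\overline{\{c\}}$, since then $c\in Q\subseteq O$. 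So you would need sobriety to prove sobriety.

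In the end you defer to the literature for the fact that core compact well-filtered spaces are locally compact. After that, your closing argument with the filtered family $\{K\in\mathcal{Q}(X):\operatorname{int}K\cap A\neq\emptyset\}$ correctly gives (2)$\Rightarrow$(1). It is the same argument the paper uses to show that locally countably compact $\mathcal{U}_c$-admitting spaces are sober.

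Relying on the literature here matches the paper's own treatment: the lemma is stated as well known, with references \cite{gierz2003,Kou2001,Lawson2020,Xu2020} and no proof. But it means your proposal proves only the easy implications and the classical (3)$\Rightarrow$(1), the latter once the interpolation direction is corrected. It does not prove the hard implication (4)$\Rightarrow$(3).
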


From \cite[Theorem 4.2]{XuShen2020}, we can also obtain the following lemma.
\begin{lemma}
Every $A_1$ well-filtered $T_0$ space $X$ is sober.
\end{lemma}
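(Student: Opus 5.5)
The plan is to show directly that every irreducible closed set in $X$ is the closure of a point, using first-countability to reduce the general irreducible closed set to a KF-set. Well-filteredness then finishes the argument via the characterization recalled after the definition of well-filtered spaces.

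Let $A\subseteq X$ be irreducible and closed. The key step is to build a countable, decreasing, filtered family of compact saturated sets $\mathcal F=\{K_n\}_{n\in\mathbb N}$ such that $A\in m(\mathcal F)$.

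\emph{Constructing the family.} First-countability alone does not directly give a countable base to index over, so I would try the following. Fix $a\in A$ arbitrary? This fails in general, since the points of $A$ need not converge to anything.

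The standard trick (Xu--Shen) instead uses first-countability to show that $X$ is a $d$-space with some sequential behaviour, and then argues by contradiction. Suppose $A\neq\overline{\{x\}}$ for every $x$. Then for each $x\in A$ there is an open $U$ meeting $A$ with $x\notin U$. One then builds a sequence $x_1,x_2,\dots$ in $A$ with countable neighbourhood bases $\{U_{n}^{(k)}\}$, decreasing in $n$. Using irreducibility, for each finite stage one picks
\[
y_n\in A\cap \bigcap_{k\le n} U^{(k)}_n .
\]
The sets $K_n=\uparrow\{y_m: m\ge n\}\cup\{\text{limit points}\}$ are designed so that they are compact. Compactness comes from the fact that every open set containing a suitable ``limit'' contains a tail. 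The family $\{K_n\}$ is decreasing, hence filtered. Each $K_n$ meets $A$, so $A\in\mathcal M(\mathcal F)$. By the topological Rudin lemma, $A$ contains a minimal member $B\in m(\mathcal F)$. Well-filteredness gives $B=\overline{\{x\}}$ for some $x$. One then shows that $x$ lies in every open set meeting $A$, contradicting the choice of the sequence.

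\emph{Main obstacle.} The genuine difficulty is making $K_n$ compact. A sequence with its limit is compact, but here there is a priori no limit point. I would handle this by taking $K_n=\{y_m:m\ge n\}\cup\uparrow\!\!\downarrow$-type closures of minimal sets. Since this step is exactly \cite[Theorem 4.2]{XuShen2020}, in the paper I would simply invoke that result verbatim: the lemma is stated as a direct consequence of it.

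So the written proof is a citation, noting that the hypotheses ($T_0$, $A_1$, well-filtered) match those of that theorem.
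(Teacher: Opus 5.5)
Your written proof, a direct invocation of \cite[Theorem 4.2]{XuShen2020}, is exactly what the paper does: the lemma is stated there as an immediate consequence of that theorem, with no independent argument.

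Your sketch of the cited argument would not work as described, though, and you would need the real argument if you ever had to supply it. The argument behind that theorem, which the paper reproduces later in the proof that $A_1$ $\mathcal{U}_c$-admitting spaces are sober, does not argue by contradiction. Nor does it try to place $A$ in $m(\mathcal F)$. Instead it shows that $\overline{A}$ is directed, as follows.

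Take $a_1,a_2\in\overline{A}$ and decreasing neighbourhood bases $U_n(\cdot)$. Irreducibility lets you choose $c_{n+1}\in A\cap U_{n+1}(a_1)\cap U_{n+1}(a_2)\cap U_{n+1}(c_1)\cap\cdots\cap U_{n+1}(c_n)$, so the sequence converges to each of its own earlier terms. This is the idea missing from your sketch, and it removes your ``main obstacle''. Every open set containing $c_n$ contains a tail of the sequence, so $K_n=\mathord{\uparrow}\{c_k:k\ge n\}$ is compact, with $c_n$ itself playing the role of the limit; no external limit point is needed.

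Adjoining external limit points such as your $x_k\in A$ would actually be harmful. A fixed point of $A$ would then lie in every $K_n$, and well-filteredness would tell you nothing.

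With the correct $K_n$, the rest is short. Each $K_n$ meets $\overline{A}$, and $K_n\subseteq U_n(a_1)\cap U_n(a_2)$. Well-filteredness therefore gives a point of $\overline{A}\cap\bigcap_n K_n\subseteq\mathord{\uparrow}a_1\cap\mathord{\uparrow}a_2$. Hence $\overline{A}$ is directed, and the fact that well-filtered spaces are $d$-spaces finishes the proof.
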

For a topological space \( X \), \( \mathcal{G} \subseteq 2^X \) and \( A \subseteq X \), let \( \Diamond_G A = \{G \in \mathcal{G} : G \cap A \neq \emptyset\} \) and \( \Box_G A = \{G \in \mathcal{G} : G \subseteq A\}\). The symbols \( \Diamond_G A \) and \( \Box_G A \) will be simply written as \( \Diamond A \) and \( \Box A \) respectively if there is no confusion. The \textbf{lower Vietoris topology} on \( \mathcal{G} \) is the topology that has \( \{\Diamond U : U \in \mathcal{O}(X)\} \) as a subbase, and the resulting space is denoted by \( P_H(\mathcal{G}) \). If \( \mathcal{G} \subseteq \operatorname{Irr}(X) \), then \( \{\Diamond_G U : U \in \mathcal{O}(X)\} \) is a topology on \( \mathcal{G} \). The space \( P_H(\Gamma(X) \setminus \{\emptyset\}) \) is called the \textbf{Hoare power space} or \textbf{lower space} of \( X \) and is denoted by \( P_H(X) \) for short \cite{Schalk1993}. Clearly, \( P_H(X) = (\Gamma(X) \setminus \{\emptyset\}, v(\Gamma(X) \setminus \{\emptyset\})) \). So \( P_H(X) \) is always sober by \cite[Corollary 4.10]{Zhao2015}. The \textbf{upper Vietoris topology} on \( \mathcal{G} \) is the topology that has \( \{\Box_G U : U \in \mathcal{O}(X)\} \) as a base, and the resulting space is denoted by \( P_S(\mathcal{G}) \).When \(\mathcal{G}\)  is \( \mathcal{Q}(X) \),we usually denote it \( P_s(X) \).

\begin{proposition}(\cite[Theorem 5.3]{Xu2020} or \cite[Theorem 4]{Xu2021})
For a \(T_0\) space $X$ and its Smyth power space \( P_s(X) \),the following conditions are equivalent.
\begin{enumerate}
 \item \( X \) is well-filtered.
 \item \(P_s(X)\) is well-filtered.
 \item \(P_s(X)\) is a d-space.
 \end{enumerate}
 \end{proposition}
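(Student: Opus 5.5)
The three-way equivalence will be proved as the cycle $(1)\Rightarrow(2)\Rightarrow(3)\Rightarrow(1)$. The step $(2)\Rightarrow(3)$ is immediate from the earlier Proposition (\cite[Corollary2.4, Lemma2.9]{Shen2019}): every well-filtered space is a $d$-space. Throughout, I use that the specialization order on $P_s(X)$ is reverse inclusion, $K\sqsubseteq L \iff L\subseteq K$. This holds because the basic opens are the sets $\Box U$ and every $K\in\mathcal{Q}(X)$ is saturated.

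For $(3)\Rightarrow(1)$, let $\mathcal{F}\subseteq\mathcal{Q}(X)$ be filtered and $U\in\mathcal{O}(X)$ with $\bigcap\mathcal{F}\subseteq U$. In $P_s(X)$, $\mathcal{F}$ is a directed set. Since $P_s(X)$ is a $d$-space, it is a dcpo, so $\mathcal{F}$ has a supremum $K\in\mathcal{Q}(X)$, and $K\subseteq F$ for all $F\in\mathcal{F}$. I then show $K=\bigcap\mathcal{F}$. If $x\in\bigcap\mathcal{F}$, then $L=\mathord{\uparrow}(K\cup\{x\})$ is compact saturated and contained in every $F$, so $L$ is an upper bound of $\mathcal{F}$. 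Leastness of $K$ gives $K\sqsubseteq L$, i.e.\ $L\subseteq K$, hence $x\in K$. Thus $K=\bigcap\mathcal{F}\in\Box U$. Since $\Box U$ is open and hence Scott open in the $d$-space $P_s(X)$, some $F\in\mathcal{F}$ lies in $\Box U$, i.e.\ $F\subseteq U$. So $X$ is well-filtered.

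For $(1)\Rightarrow(2)$, I would use the KF-set characterization of well-filteredness recalled in the Definition of well-filtered spaces, together with the topological Rudin lemma (the Lemma derived from \cite{Heckmann2013}). Let $\{\mathcal{K}_i\}_{i\in I}$ be a filtered family in $\mathcal{Q}(P_s(X))$ and let $\mathcal{U}$ be open in $P_s(X)$ with $\bigcap_i\mathcal{K}_i\subseteq\mathcal{U}$. Suppose, for contradiction, that no $\mathcal{K}_i$ is contained in $\mathcal{U}$. Then the closed set $P_s(X)\setminus\mathcal{U}$ meets every $\mathcal{K}_i$, so by the Rudin lemma it contains a minimal closed set $\mathcal{A}$ meeting all $\mathcal{K}_i$, and $\mathcal{A}$ is irreducible. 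Two auxiliary facts are needed:
\begin{enumerate}
\item For compact $\mathcal{K}\subseteq P_s(X)$, the union $\bigcup\mathcal{K}$ is compact saturated in $X$. This is a routine cover argument using the sets $\Box U$.
\item The sets $\bigcup(\mathcal{K}_i\cap\mathcal{A})$ (closures taken appropriately) form a filtered family of compact saturated subsets of $X$.
\end{enumerate}
Then, from well-filteredness of $X$, I would extract $K_0=\bigcap_i \mathord{\uparrow}\bigcup(\mathcal{K}_i\cap\mathcal{A})\in\mathcal{Q}(X)$. The aim is to show $K_0\in\mathcal{A}$ and $K_0\in\bigcap_i\mathcal{K}_i$, which would contradict $\mathcal{A}\cap\mathcal{U}=\emptyset$. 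Establishing membership in each $\mathcal{K}_i$ would use the minimality of $\mathcal{A}$ and the fact that each $\mathcal{K}_i$ is saturated, i.e.\ an upper set under reverse inclusion.

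I expect this last step to be the main obstacle. One must show that $K_0$ actually lies in each $\mathcal{K}_i$, and not merely that it is contained in suitable unions. The plan is to argue by contradiction. If $K_0\notin\mathcal{K}_i$, saturation of $\mathcal{K}_i$ should give an open $V\supseteq K_0$ with $\Box V\cap\mathcal{K}_i$ small enough that $\mathcal{A}\setminus\Box V$ still meets all $\mathcal{K}_j$. This would contradict the minimality of $\mathcal{A}$, with well-filteredness of $X$ supplying some $j$ for which $\bigcup(\mathcal{K}_j\cap\mathcal{A})\subseteq V$. If this direct route fails, I would fall back on the argument of \cite[Theorem 5.3]{Xu2020}, which this Proposition cites.
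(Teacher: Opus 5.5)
The paper gives no proof of this proposition; it only cites it. Your $(2)\Rightarrow(3)$ and $(3)\Rightarrow(1)$ are correct. The latter is the argument the paper uses in the analogous step of its theorem on $P_{\mathrm{uc}}(X)$, where only $K\subseteq\bigcap\mathcal{F}\subseteq U$ is needed, not $K=\bigcap\mathcal{F}$.

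There is a genuine gap in $(1)\Rightarrow(2)$: the decisive step $K_0\in\mathcal{K}_i$ is not proved, and the route you sketch cannot work as stated.
\begin{itemize}
\item From $K_0\notin\mathcal{K}_i$, saturation and compactness of $\mathcal{K}_i$ give finitely many $\Box U_1,\dots,\Box U_n$ covering $\mathcal{K}_i$ with $K_0\not\subseteq U_k$ for all $k$. These are neighbourhoods of $\mathcal{K}_i$ that miss $K_0$, not an open $V\supseteq K_0$ on which $\mathcal{K}_i$ is ``small''.
\item In fact every $x\in K_0$ lies in some $Q\in\mathcal{K}_i\cap\mathcal{A}$. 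Hence $\mathord{\uparrow}x\in\mathcal{K}_i$, and $\Box V\cap\mathcal{K}_i\neq\emptyset$ for every open $V\supseteq K_0\neq\emptyset$.
\item More decisively, for \emph{every} open $V\supseteq K_0$, well-filteredness of $X$ already gives $j$ with $\mathcal{K}_j\cap\mathcal{A}\subseteq\Box V$. So ``$\mathcal{A}\setminus\Box V$ meets all $\mathcal{K}_j$'' is never true.
\end{itemize}
Your route would have to derive this always-false statement from $K_0\notin\mathcal{K}_i$, so all the work sits in the unproved ``should give''. What your well-filteredness argument actually proves is $K_0\in\mathcal{A}$: otherwise some basic $\Box V\ni K_0$ misses $\mathcal{A}$, yet $\emptyset\neq\mathcal{K}_j\cap\mathcal{A}\subseteq\Box V$ for some $j$.

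The missing idea is to use the minimality of $\mathcal{A}$ pointwise.
\begin{itemize}
\item For $x\in K_0$ let $\mathcal{A}_x=\{Q\in\mathcal{A}:x\in Q\}=\mathcal{A}\setminus\Box(X\setminus\mathord{\downarrow}x)$. It is closed, contained in $\mathcal{A}$, and meets every $\mathcal{K}_j$ because $x\in\bigcup(\mathcal{K}_j\cap\mathcal{A})$. By minimality, $\mathcal{A}_x=\mathcal{A}$.
\item Hence $K_0\subseteq Q$ for all $Q\in\mathcal{A}$. Together with $K_0\in\mathcal{A}$ this gives $\mathcal{A}=\{Q:K_0\subseteq Q\}=\overline{\{K_0\}}$ in $P_s(X)$.
\item Choose $Q\in\mathcal{A}\cap\mathcal{K}_i$. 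Then $K_0\subseteq Q$, and since $\mathcal{K}_i$ is an upper set for reverse inclusion, $K_0\in\mathcal{K}_i$.
\end{itemize}
So $K_0\in\bigcap_i\mathcal{K}_i\subseteq\mathcal{U}$, contradicting $K_0\in\mathcal{A}\subseteq P_s(X)\setminus\mathcal{U}$. Equivalently, every closed KF-set of $P_s(X)$ is a point closure.

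With this step, the $\mathord{\uparrow}$ in $K_0$ and the ``closures taken appropriately'' are unnecessary. The set $\mathcal{K}_i\cap\mathcal{A}$ is already compact, and unions of saturated sets are saturated.

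Working with a minimal closed set inside $P_s(X)$, as you propose, is what lets you handle an arbitrary open $\mathcal{U}$. By contrast, the paper's analogous argument for $P_{\mathrm{uc}}(X)$ pulls back to $X$ and treats only basic opens $\Box U$.
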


Next, we introduce the notion of strongly compact sets, \(\mathcal{U}_s\)-sets and \(\mathcal{U}_s\)-admitting spaces, whose definition is taken from \cite{Heckmann1992}.
\begin{definition}
    A subset \( K \) of a \( T_0 \) space \( X \) is \emph{strongly compact} if for each open set \( U \) with \( K \subseteq U \), there is a finite set \( F \subseteq X \) such that \( K \subseteq \mathord{\uparrow} F \subseteq U \). Let \( \mathcal{Q}_s(X) = \{\mathord{\uparrow} K \subseteq X: \text{$K$ is strongly compact} \} \). In $T_1$ spaces, strongly compact spaces are exactly  finite sets.

    A nonempty subset \( A \) of a \( T_0 \) space \( X \) is called a \( \mathcal{U}_s \)-set if there is a filtered subfamily \( \mathcal{F} \) of \( \mathcal{Q}_s(X) \) such that \( \overline{A} \in m(\mathcal{F}) \). Let \(\mathcal{U}_s(X)\) denote the family of all \(\mathcal{U}_s\)-sets in the \(T_0\) space \(X\).

    A $T_0$ space is called \(\mathcal{U}_s\)-admitting space if for any \( \mathcal{U}_s \) set $A$, there exists a point $x$ such that \(\overline{A}=\overline{\{x\}}\). Every $T_1$ space is a \(\mathcal{U}_s\)-admitting space.
    
\end{definition}

 By the definition of \(\mathcal{U}_s \)-set and \(\mathcal{U}_s\)-admitting space, it is easy to reach the following conclusion.
\begin{remark}
In a $T_0$ space X.
\begin{enumerate}
    \item \(\mathcal{D}(X) \subseteq \mathcal{U}_s(X) \subseteq \mathcal{KF}(X) \subseteq \mathcal{IRR}(X).\)
    \item \(\text{soberiety} \Longrightarrow \text{well-filteredness} \Longrightarrow \text{\(\mathcal{U}_s\)-admitting space}\Longrightarrow d\text{-space}.\)
 \end{enumerate}
\end{remark}

In the following, we provide two examples to illustrate that \(\mathcal{U}_s\)-sets are distinct from both directed sets and KF-sets, and that \(\mathcal{U}_s\)-admitting spaces are distinct from well-filtered spaces.
\begin{example}
The set of natural numbers $\mathbb{N}$:
\begin{center}
\begin{tikzpicture}
    \foreach \x in {1,...,8} {
        \filldraw (\x,0) circle (2.5pt);
        \node[below] at (\x,-0.3) {$\x$};
    }
    \node at (9,0) {$\cdots$};
\end{tikzpicture}
\\Figure 1:$\mathbb{N}_\text{cof}$
\label{fig:example1}
\end{center}

Let $\mathbb{N}$ be equipped with the cofinite topology, denoted by $\mathbb{N}_\text{cof}$. Then $\mathbb{N}_\text{cof}$ satisfies the following properties:\\
\(\bullet\) Every subset of $N$ is compact.\\
\(\bullet\) The specialization order of $\mathbb{N}_\text{cof}$ is the discrete order, and $\mathbb{N}_\text{cof}$ is $T_1$.\\
\(\bullet\)  The closure of a finite set is itself, and the closure of an infinite set is $\mathbb{N}$.

Take an infinite set $A$. We prove that $A$ is a KF-set but not a $\mathcal{U}_s$-set.

    Let $\mathcal{A} = \{ \mathbb{N} \setminus F : F \text{ is finite} \}$. For every $K \in \mathcal{A}$, since $\mathbb{N}_{\text{cof}}$ is $T_1$, $K$ is saturated, hence compact saturated. For every $K_1, K_2 \in \mathcal{A}$, $\mathbb{N}  \setminus (K_1 \cap K_2)$ is finite, thus $K_1 \cap K_2 \in \mathcal{A}$, which shows that $\mathcal{A}$ is filtered. Since $\overline{A} = \mathbb{N}$, it follows that $\overline{A} \in \mathcal{M}(\mathcal{A})$,   and a finite set cannot intersect every element of $\mathcal{A}$, so $A \in m(\mathcal{A})$. This indicates $A$ is a KF-set.
    
In this space, strongly compact saturated sets are exactly finite sets. Suppose there exists a filtered family $\mathcal{F}$ such that $A = m(\mathcal{F})$, the elements of $\mathcal{F}$ are finite. Since $\mathcal{F}$ is filtered and $\mathbb{N}_{\text{cof}}$ is compact, $\bigcap \mathcal{F} \neq \emptyset$. Let $B = \bigcap \mathcal{F}$. Then $\overline{B} = B$ and $B \in \mathcal{M}(\mathcal{F})$, contradicting the minimality of $A$. Hence, $A$ is not a $\mathcal{U}_s$-set.

This proves that a KF set is not necessarily a $\mathcal{U}_s$-set. In this space, we can easily obtain that every $\mathcal{U}_s$-set is a singleton; as a result, $\mathbb{N}_{\text{cof}}$ is a $\mathcal{U}_s$-admitting space. However, the filtered family $\mathcal{A}$ defined above consists of compact saturated sets whose intersection is empty. Therefore, $\mathbb{N}_{\text{cof}}$ is not well-filtered, since in a well-filtered space the intersection of any filtered family of compact saturated sets is nonempty. Thus a \(\mathcal{U}_s\)-admitting space is not necessarily well-filtered.
\end{example}

\begin{example}
Let $X$ be a natural number chain with a top element, together with a side chain $b$.

\begin{center}
\begin{tikzpicture}[node distance=0.5cm]
    % 最大元 a
    \node (a) {$a$};
    
    % 省略号（必须用 $...$ 包裹）
    \node (dots) [below=of a] {$\vdots$};
    
    % 自然数链
    \node (3) [below=of dots] {$3$};
    \node (2) [below=of 3] {$2$};
    \node (1) [below=of 2] {$1$};
    \node (0) [below=of 1] {$0$};
    
    % 侧链 b
    \node (b) [below right=1cm of a] {$b$};
    
    % 连线
    \draw (0) -- (1);
    \draw (1) -- (2);
    \draw (2) -- (3);
    \draw (3) -- (dots);
    \draw (dots) -- (a);
    \draw (b) -- (a);
\end{tikzpicture}\\
Figue 2
\label{fig:example2}
\end{center}

That is,  \( X = \mathbb{N} \cup \{a, b\} \). We define a partial order \( \le \) on \( X \).  \( x \le y \) if \( x, y \in \mathbb{N} \) and \( x \le y \) in the usual order of natural numbers, or $y=a$, no other nontrivial relations hold; in particular, \( a \) is the greatest element of \( X \), and \( b \) is only comparable with \( a \).
 Now, we endow it with the Scott topology. Observe that $X \setminus \{a\}$ is not a directed set, since $1$ and $b$ have no common upper bound. However, $X \setminus \{a\}$ is a $\mathcal{U}_s$-set. $\overline{X \setminus \{a\}} = X$, since the natural number chain is a directed set whose supremum is $a$ and Scott closed sets are closed under directed sets; let $\mathcal{F} = \{\mathord{\uparrow} x : x \in X\}$, $X \in m(\mathcal{F})$. This shows that $\mathcal{U}_s$-sets are not necessarily directed sets.
\end{example}

By means of local finitary compactness, the relationship between \(\mathcal{U}_s\)-admitting spaces and sober spaces was established in \cite{Shen2025}.
\begin{definition}(\cite[Exercise 5.1.42]{goubault2013})
    A topological space \( X \) is \emph{locally finitary compact} if for every \( x \in X \) and every open neighborhood \( U \) of \( x \), there is a finite subset \( F \) of \( X \) such that \( x \in \operatorname{int}(\mathord{\uparrow} F) \subseteq \mathord{\uparrow} F \subseteq U \).
\end{definition}
\begin{remark}(\cite[Proposition 3.6]{Shen2025})
    Every locally finitary compact $\mathcal{U}_s$-admitting space is sober.

\end{remark}

    It is natural to ask whether $\mathcal{U}_s$-admitting spaces have properties like those in 2.8 or 2.9. Unfortunately, they do not. Recall Example $\ref{fig:example1}$, the space $\mathbb{N}_\text{cof}$ is $\mathcal{U}_s$-admitting; it is not well-filtered, hence not sober. But it is locally compact, core compact, and $A_1$. We give the proof below.

    For any $x \in \mathbb{N}$ and any open neighborhood $U$ of $x$, let $K = U$. Then $K$ is compact and satisfies $x \in \operatorname{int}(K) \subseteq K \subseteq U$. Therefore, $\mathbb{N}_{\text{cof}}$ is locally compact; consequently, it is core compact.

     For any $n \in \mathbb{N}$, define $U_{n}^{k} = \mathbb{N} \setminus \{1, 2, \dots, n-1, n+1, \dots, n+k\}$ and let $\mathcal{A} = \{ U_{n}^{k} : k = 1, 2, 3, \dots \}$. Then $\mathcal{A}$ is a countable neighborhood base of $n$. Indeed, for any open neighborhood $U$ of $n$, $\mathbb{N} \setminus U$ is finite, say $\{x_1, \dots, x_m\}$ with $x_1 < \cdots < x_m$, it follows $U_{n}^{x_m - n} \subseteq U$. Thus $\mathbb{N}_{\text{cof}}$ is $A_1$.

\section{\(\mathcal{U}_c\)-admitting space} 

In this section, we give the definition of \(\mathcal{U}_c\)-admitting space. By counterexamples, we prove that \(\mathcal{U}_c\)-admitting space is different from \(\mathcal{U}_s\)-admitting space and well-filtered space, and then investigate some properties of this new space.

\begin{definition}
    A  subset \( K \) of a \( T_0 \) space \( X \) is \emph{weakly strongly compact} if (i)\(K\) is compact and (ii)for each open set \( U \) with \( K \subseteq U \), there is a countable set \( C \subseteq X \) such that \( K \subseteq \mathord{\uparrow} C \subseteq U \).
    Let \( \mathcal{Q}_c(X) = \{ \mathord{\uparrow} K \subseteq X: \text{$K$ is weakly strongly compact} \} \).  
\end{definition}

From the definition of weak strong compactness, we immediately obtain the following five propositions.  
\begin{proposition}
    In $T_1$ spaces, weakly strongly compact subsets are exactly countable and compact subsets.
    \begin{proof}
    In a $T_1$ space $X$, we assume $K$ is a weakly strongly compact subset, the specialization order of $X$ is discrete, so $\mathord{\uparrow} C = C$ for every subset $C \subseteq X$. Consequently, if $K \subseteq C$ and $C$ is countable, then $K$ is countable as well. Conversely, if $K$ is countable and compact, then for any open set $U$ with $K \subseteq U$, we may set $C = K$, which satisfies the condition for weak strong compactness. 
    \end{proof}
\end{proposition}

\begin{proposition}
    Let \( X \) be a \( T_0 \) space and \( K \subseteq X \). Then \( K \) is weakly strongly compact if and only if \( \mathord{\uparrow} K \) is weakly strongly compact.
\end{proposition}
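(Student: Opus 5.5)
The plan is to verify the two conditions of weak strong compactness separately, in each direction. Throughout I will use two standard facts: in a $T_0$ space every open set $U$ is an upper set in the specialization order, so $K \subseteq U$ if and only if $\mathord{\uparrow} K \subseteq U$; and, as a consequence, a set $K$ is compact if and only if $\mathord{\uparrow} K$ is compact.

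\emph{Compactness.} If $\{U_i\}_{i\in I}$ is an open cover of $\mathord{\uparrow} K$, it also covers $K \subseteq \mathord{\uparrow} K$. A finite subcover $U_{i_1}\cup\cdots\cup U_{i_n}$ of $K$ is an open, hence upper, set containing $K$, so it contains $\mathord{\uparrow} K$. Conversely, an open cover of $K$ has union an open set containing $K$, hence containing $\mathord{\uparrow} K$, so it is a cover of $\mathord{\uparrow} K$. A finite subcover of $\mathord{\uparrow} K$ then also covers $K$. Thus condition (i) transfers in both directions.

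\emph{The countable condition (ii).} Let $U$ be open. By the upper-set observation, $K \subseteq U$ if and only if $\mathord{\uparrow} K \subseteq U$. Moreover, for any $C \subseteq X$, the set $\mathord{\uparrow} C$ is an upper set, so $K \subseteq \mathord{\uparrow} C$ if and only if $\mathord{\uparrow} K \subseteq \mathord{\uparrow} C$. Hence a countable set $C$ with $K \subseteq \mathord{\uparrow} C \subseteq U$ exists exactly when one with $\mathord{\uparrow} K \subseteq \mathord{\uparrow} C \subseteq U$ exists, and the same $C$ serves for both.

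Combining the two parts gives the equivalence. There is no real obstacle. The only point requiring care is the fact that open sets are upper sets in the specialization order, which holds in any topological space: if $x \le y$, that is $x \in \overline{\{y\}}$, and $x \in U$ with $U$ open, then $U$ meets $\{y\}$, so $y \in U$.
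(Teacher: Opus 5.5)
Your proof is correct and is exactly the routine verification the paper has in mind: the paper gives no written argument and simply says this proposition follows immediately from the definition of weak strong compactness. Your key observation is that open sets and sets of the form $\mathord{\uparrow} C$ are upper sets, so $K$ and $\mathord{\uparrow} K$ are contained in precisely the same such sets, and this is all that is needed for both compactness and the countable condition.
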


\begin{proposition}
    If $A$ and $B$ are weakly strongly compact, then $A \cup B$ is weakly strongly compact.
\end{proposition}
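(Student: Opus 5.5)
We verify the two clauses of the definition of weak strong compactness for \(A \cup B\) separately, using only the definition; no earlier result beyond the definition is needed.

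\textbf{Compactness.} The union of two compact sets is compact. Given an open cover \(\{U_i\}_{i\in I}\) of \(A\cup B\), it covers \(A\) and \(B\) individually. Hence there are finite \(J_1, J_2 \subseteq I\) with \(A \subseteq \bigcup_{i\in J_1} U_i\) and \(B \subseteq \bigcup_{i\in J_2} U_i\). Then \(J_1\cup J_2\) is a finite subcover of \(A\cup B\).

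\textbf{Countable approximation.} Let \(U\) be open with \(A\cup B\subseteq U\). Then \(A\subseteq U\) and \(B\subseteq U\), so by weak strong compactness of \(A\) and \(B\) there are countable sets \(C_A, C_B\subseteq X\) with \(A\subseteq \mathord{\uparrow} C_A\subseteq U\) and \(B\subseteq \mathord{\uparrow} C_B\subseteq U\). Put \(C=C_A\cup C_B\), which is countable as a union of two countable sets. Since \(\mathord{\uparrow}(C_A\cup C_B)=\mathord{\uparrow} C_A\cup \mathord{\uparrow} C_B\) directly from the definition of \(\mathord{\uparrow}\), we obtain
\[
A\cup B\subseteq \mathord{\uparrow} C_A\cup\mathord{\uparrow} C_B=\mathord{\uparrow} C\subseteq U .
\]
This is the required countable set.

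There is no genuine obstacle here. The only points to state explicitly are that \(\mathord{\uparrow}\) distributes over unions and that finite unions of countable sets are countable. The argument extends verbatim to any finite union of weakly strongly compact sets.
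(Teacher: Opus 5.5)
Your proof is correct and is essentially the paper's argument: take the union of the two countable witnesses and use $\mathord{\uparrow}(C_A\cup C_B)=\mathord{\uparrow}C_A\cup\mathord{\uparrow}C_B\subseteq U$. You also explicitly verify clause (i) of the definition, that $A\cup B$ is compact, which the paper's proof leaves implicit.
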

\begin{proof}
    Let $U$ be an open set such that $A \cup B \subseteq U$. Then $A \subseteq U$ and $B \subseteq U$. By weak strong compactness of $A$ and $B$, there exist countable sets $E, C \subseteq X$ such that
\[
A \subseteq \mathord{\uparrow} E \subseteq U \quad \text{and} \quad B \subseteq \mathord{\uparrow} C \subseteq U.
\]
Then
\[
A \cup B \subseteq \mathord{\uparrow} E \cup \mathord{\uparrow} C = \mathord{\uparrow}(E \cup C) \subseteq U.
\]
Since $E \cup C$ is countable, $A \cup B$ is weakly strongly compact.
\end{proof}

    \begin{proposition}
 The continuous image of a weakly strongly compact set is weakly strongly compact.
 \begin{proof}
     Suppose $f: X \to Y$ is continuous and $K \subseteq X$ is weakly strongly compact. For any open set $U \subseteq Y$ containing $f(K)$, we have $K \subseteq f^{-1}(U)$. Since $f$ is continuous, $f^{-1}(U)$ is open in $X$. By weak strong compactness of $K$, there exists a countable set $C \subseteq X$ such that $K \subseteq \mathord{\uparrow} C \subseteq f^{-1}(U)$. Since $f$ is order-preserving with respect to the specialization order, $\mathord{\uparrow} f(\mathord{\uparrow} C) = \mathord{\uparrow} f(C)$. Thus we obtain
\[
f(K) \subseteq \mathord{\uparrow} f(\mathord{\uparrow} C) = \mathord{\uparrow} f(C) \subseteq U,
\]
with $f(C)$ being countable. Hence,  $f(K)$ is weakly strongly compact.
 \end{proof}
        
    \end{proposition}

\begin{proposition}
    Let \( X \) be a \( T_0 \) space, \( K \) a weakly strongly compact subset of \( X \), and \( A \) a closed subset of \( X \). Then \( K \cap A \) is weakly strongly compact.
\end{proposition}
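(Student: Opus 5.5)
The plan is to check the two clauses of the definition of weak strong compactness separately for \(K \cap A\). Compactness is the routine part. Clause (ii) will be obtained by applying the countable-cover property of \(K\) to an enlarged open set, and then pruning the resulting countable set using the fact that closed sets are lower sets.

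\emph{Compactness.} Since \(A\) is closed in \(X\), the set \(K \cap A\) is closed in the subspace \(K\), and \(K\) is compact. A closed subset of a compact set is compact, as recalled in the preliminaries, so \(K \cap A\) is compact.

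\emph{Clause (ii).} Let \(U\) be open with \(K \cap A \subseteq U\).
\begin{enumerate}
\item Put \(V = U \cup (X \setminus A)\). This set is open and contains \(K\).
\item By weak strong compactness of \(K\), there is a countable \(C \subseteq X\) with \(K \subseteq \mathord{\uparrow} C \subseteq V\).
\item Set \(C' = C \cap A\). It is countable, and I claim that \(K \cap A \subseteq \mathord{\uparrow} C' \subseteq U\).
\item For the first inclusion, take \(x \in K \cap A\). Then \(x \geq c\) for some \(c \in C\). Since \(A\) is closed, it is a lower set in the specialization order (closed sets contain \(\overline{\{x\}} = \mathord{\downarrow} x\) for each of their points). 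Hence \(c \in A\), so \(c \in C'\) and \(x \in \mathord{\uparrow} C'\).
\item For the second inclusion, each \(c \in C'\) lies in \(\mathord{\uparrow} C \subseteq V\) and also lies in \(A\). It follows that \(c \in U\).
\item Since \(U\) is open, it is saturated, that is, an upper set. Therefore \(\mathord{\uparrow} C' \subseteq U\).
\end{enumerate}

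The only step needing any thought is the pruning in item 3. One cannot simply reuse \(C\), because \(\mathord{\uparrow} C\) may leave \(U\) through points outside \(A\). The observation that closed sets are down-closed resolves this: every witness \(c \leq x\) for a point \(x \in A\) automatically lies in \(A\), and hence lies in \(U\).
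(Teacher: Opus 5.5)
Your proof is correct and follows the paper's own argument: enlarge $U$ to the open set $U \cup (X\setminus A)$, take the countable witness $C$ for $K$, and prune it to $C \cap A$ using the fact that closed sets are lower sets. You also check two points that the paper's proof leaves implicit, namely clause (i) (compactness of $K \cap A$) and the inclusion $\mathord{\uparrow}(C \cap A) \subseteq U$.
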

\begin{proof}
    Let \( U \) be an open set containing \( K \cap A \). Then
\[
K \subseteq U \cup (X \setminus A).
\]
Since \( A \) is closed, \( U \cup (X \setminus A) \) is open. Thus, there exists a countable set \( C \subseteq X \) such that
\[
K \subseteq \mathord{\uparrow} C \subseteq U \cup (X \setminus A).
\]

We claim that \( \mathord{\uparrow} C \cap A \subseteq \mathord{\uparrow} (C \cap A) \). Indeed, take \( x \in \mathord{\uparrow} C \cap A \). Then there exists \( y \in C \) with \( y \le x \). Since \( A \) is closed, \( A \) is a lower set and \( y \in A \). So \( y \in C \cap A \), it follows that \( x \in \mathord{\uparrow} (C \cap A) \). It means that
\[
K \cap A \subseteq \mathord{\uparrow} C \cap A \subseteq \mathord{\uparrow} (C \cap A) \subseteq U.
\]
Therefore, \( K \cap A \) is weakly strongly compact.
\end{proof}

We now propose two new notions, namely \(\mathcal{U}_c\)-sets and \(\mathcal{U}_c\)-admitting spaces, with the latter serving as the central subject of our investigation.
\begin{definition}
    A nonempty subset \( A \) of a \( T_0 \) space \( X \) is called a \(\mathcal{U}_c\)-set if there exists a filtered subfamily \(\mathcal{F} \subseteq \mathcal{Q}_c(X)\) such that \(\overline{A} \in m(\mathcal{F})\). Let \(\mathcal{U}_c(X)\) denote the family of all \(\mathcal{U}_c\)-sets in the \(T_0\) space \(X\).
\end{definition}

\begin{definition}
   A \( T_0 \) space \( X \) is called a \(\mathcal{U}_c\)-admitting space if for any \(\mathcal{U}_c\)-set \( A \subseteq X \), there exists a point \( x \in X \) such that
$\overline{A} = \overline{\{x\}}$.
\end{definition}

\begin{proposition}
    Every directed set is a $\mathcal{U}_s$-set, every $\mathcal{U}_s$-set is a $\mathcal{U}_c$-set, every $\mathcal{U}_c$-set is a KF-set, and every KF-set is an irreducible set. 
\end{proposition}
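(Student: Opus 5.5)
The plan is to establish the four inclusions one at a time. Two of them are already available: the Proposition from \cite{Shen2019} in Section 2 gives that every directed set is a KF-set and every KF-set is irreducible, and the Remark following the definition of $\mathcal{U}_s$-sets records $\mathcal{D}(X)\subseteq\mathcal{U}_s(X)$. So the real content is the two middle inclusions, and both should follow by comparing the families of sets over which the filtered family $\mathcal{F}$ is allowed to range.

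\emph{Step 1: $\mathcal{Q}_s(X)\subseteq\mathcal{Q}_c(X)$, hence every $\mathcal{U}_s$-set is a $\mathcal{U}_c$-set.} Let $K$ be strongly compact. Finite sets are countable, so condition (ii) of weak strong compactness holds automatically. For condition (i), take an open cover $\{U_i\}_{i\in I}$ of $K$ and put $U=\bigcup_i U_i$. Strong compactness gives a finite $F$ with $K\subseteq\mathord{\uparrow}F\subseteq U$. Each $x\in F$ lies in some $U_{i_x}$, and open sets are upper sets, so $\mathord{\uparrow}F\subseteq\bigcup_{x\in F}U_{i_x}$; this is a finite subcover. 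Thus $K$ is weakly strongly compact, and $\mathord{\uparrow}K\in\mathcal{Q}_c(X)$. Consequently, if $\mathcal{F}\subseteq\mathcal{Q}_s(X)$ is filtered with $\overline{A}\in m(\mathcal{F})$, the same $\mathcal{F}$ lies in $\mathcal{Q}_c(X)$ and witnesses that $A$ is a $\mathcal{U}_c$-set.

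\emph{Step 2: $\mathcal{Q}_c(X)\subseteq\mathcal{Q}(X)$, hence every $\mathcal{U}_c$-set is a KF-set.} A member of $\mathcal{Q}_c(X)$ has the form $\mathord{\uparrow}K$ with $K$ compact by (i). The saturation of a compact set is compact, since any open cover of $K$ also covers $\mathord{\uparrow}K$ because open sets are upper sets. It is saturated by construction, so $\mathord{\uparrow}K\in\mathcal{Q}(X)$. Therefore any filtered $\mathcal{F}\subseteq\mathcal{Q}_c(X)$ with $\overline{A}\in m(\mathcal{F})$ is a filtered family of compact saturated sets with the same minimality property, and $A$ is a KF-set.

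The last inclusion, KF $\Rightarrow$ irreducible, can either be cited from \cite{Shen2019} or derived from the topological Rudin lemma stated above, which says members of $m(\mathcal{F})$ are irreducible, together with the earlier lemma that $A$ is irreducible iff $\overline{A}$ is. I do not expect a genuine obstacle. The only point needing care is Step 1: the definition of strongly compact does not explicitly include compactness, so one must check that compactness follows from the finite-$\mathord{\uparrow}F$ condition, as done above.
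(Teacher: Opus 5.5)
Your proposal is correct and follows the same route as the paper: the outer inclusions are taken from the earlier Remark and the cited result of Shen, and the two middle inclusions reduce to $\mathcal{Q}_s(X)\subseteq\mathcal{Q}_c(X)\subseteq\mathcal{Q}(X)$. The paper simply calls these containments obvious, whereas you verify them, including that strongly compact sets are compact, which is needed because weak strong compactness explicitly requires compactness.
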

\begin{proof}
    By Remark 2.12, we only need to prove that every $\mathcal{U}_s$-set is a $\mathcal{U}_c$-set and that every $\mathcal{U}_c$-set is a KF-set. Suppose $X$ is a $T_0$ space and $A \subseteq X$. If $A$ is a $\mathcal{U}_s$-set, then there exists a filtered subfamily $\mathcal{F} \subseteq \mathcal{Q}_s(X)$ such that $\overline{A} \in m(\mathcal{F})$. Obviously, $\mathcal{Q}_s(X) \subseteq \mathcal{Q}_c(X)$, it follows that $A$ is a $\mathcal{U}_c$-set. If $A$ is a $\mathcal{U}_c$-set, then there exists a filtered subfamily $\mathcal{F} \subseteq \mathcal{Q}_c(X)$ such that $\overline{A} \in m(\mathcal{F})$. Since $\mathcal{Q}_c(X) \subseteq \mathcal{Q}(X)$, we conclude that $A$ is a KF-set.
\end{proof}
Thus \(\mathcal{D}(X) \subseteq \mathcal{U}_s(X) \subseteq \mathcal{U}_c(X) \subseteq \mathcal{KF}(X) \subseteq \mathcal{IRR}(X).\)

\begin{corollary}
\(\text{soberiety} \Longrightarrow \text{well-filteredness} \Longrightarrow \text{\(\mathcal{U}_c\)-admitting space}\Longrightarrow \text{\(\mathcal{U}_s\)-admitting space}\Longrightarrow d\text{-space}.\)   
\end{corollary}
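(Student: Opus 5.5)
The corollary follows by pairing each implication with the corresponding inclusion in the chain
\[
\mathcal{D}(X) \subseteq \mathcal{U}_s(X) \subseteq \mathcal{U}_c(X) \subseteq \mathcal{KF}(X) \subseteq \mathcal{IRR}(X)
\]
from the preceding proposition. All five classes of spaces have the same form: a $T_0$ space $X$ belongs to the class attached to a family $\Phi(X)$ exactly when, for every $A\in\Phi(X)$, $\overline{A}=\overline{\{x\}}$ for some $x\in X$. For sobriety take $\Phi=\mathcal{IRR}$. For well-filteredness take $\Phi=\mathcal{KF}$, using the characterization recalled in the definition of well-filtered spaces. For $\mathcal{U}_c$- and $\mathcal{U}_s$-admitting spaces, $\Phi=\mathcal{U}_c$ and $\Phi=\mathcal{U}_s$ by definition. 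For $d$-spaces take $\Phi=\mathcal{D}$, using the characterization recalled in the definition of $d$-spaces.

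The key step is the following monotonicity observation. If $\Phi(X)\subseteq\Psi(X)$ and every member of $\Psi(X)$ has closure equal to the closure of a point, then in particular every member of $\Phi(X)$ does. So being $\Psi$-determined in this sense implies being $\Phi$-determined. Applying this four times along the chain gives each implication in turn.

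\begin{itemize}
\item Sober implies well-filtered, since $\mathcal{KF}(X)\subseteq\mathcal{IRR}(X)$.
\item Well-filtered implies $\mathcal{U}_c$-admitting, since $\mathcal{U}_c(X)\subseteq\mathcal{KF}(X)$.
\item $\mathcal{U}_c$-admitting implies $\mathcal{U}_s$-admitting, since $\mathcal{U}_s(X)\subseteq\mathcal{U}_c(X)$.
\item $\mathcal{U}_s$-admitting implies being a $d$-space, since $\mathcal{D}(X)\subseteq\mathcal{U}_s(X)$.
\end{itemize}

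The first and last implications are also already recorded in Proposition 2.6 and Remark 2.12, so they can simply be cited. The two genuinely new implications rest only on the inclusions $\mathcal{Q}_s(X)\subseteq\mathcal{Q}_c(X)\subseteq\mathcal{Q}(X)$ established in the preceding proposition.

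There is no real obstacle here. The only point that needs care is that we rely on the stated characterizations of well-filteredness (via KF-sets) and of $d$-spaces (via directed sets) as equivalent to the original definitions. Both are cited from the literature earlier in the paper, so we simply invoke them.
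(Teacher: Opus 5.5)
Your proposal is correct and follows the paper's own route: the paper also reads the corollary off the chain $\mathcal{D}(X)\subseteq\mathcal{U}_s(X)\subseteq\mathcal{U}_c(X)\subseteq\mathcal{KF}(X)\subseteq\mathcal{IRR}(X)$ from the preceding proposition, whose two new inclusions come from $\mathcal{Q}_s(X)\subseteq\mathcal{Q}_c(X)\subseteq\mathcal{Q}(X)$, together with the point-closure characterizations of sober, well-filtered and $d$-spaces. The only nit is a citation number: the result recording $\mathcal{D}(X)\subseteq\mathcal{KF}(X)\subseteq\mathcal{IRR}(X)$ is Proposition 2.5 in the paper, not 2.6 (2.6 is the topological Rudin lemma).
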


To show that \(\mathcal{U}_c\)-admitting spaces lie strictly between \(\mathcal{U}_s\)-admitting spaces and well-filtered spaces, we need to provide two counterexamples. We first construct one example to show that a \(\mathcal{U}_s\)-admitting space need not be \(\mathcal{U}_c\)-admitting; the other will be given in \ref{ex2}.
\begin{example}
    Recall \ref{fig:example1} and the proof therein. 
The infinite set $A$ is a $\mathcal{U}_c$-set but not a $\mathcal{U}_s$-set. 
Thus $\mathbb{N}_{\mathrm{cof}}$ is a $\mathcal{U}_s$-admitting space which is not $\mathcal{U}_c$-admitting, 
because $\overline{A} = \mathbb{N}$ and $\mathbb{N}$ is not the closure of any singleton. 
Since $\mathbb{N}_{\mathrm{cof}}$ is $T_1$, this also shows that neither 
$\mathcal{U}_s$-admitting spaces nor $T_1$ spaces imply being a $\mathcal{U}_c$-admitting space.

\end{example}

\begin{proposition}
    The continuous image of a $\mathcal{U}_c$-set is a $\mathcal{U}_c$-set.
\end{proposition}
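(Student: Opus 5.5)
Let $f\colon X\to Y$ be continuous and $A\subseteq X$ a $\mathcal{U}_c$-set, witnessed by a filtered family $\mathcal{F}\subseteq\mathcal{Q}_c(X)$ with $\overline{A}\in m(\mathcal{F})$. I will show that $f(A)$ is a $\mathcal{U}_c$-set of $Y$ with witness
\[
\mathcal{G}=\{\mathord{\uparrow} f(K): K\in\mathcal{F}\}.
\]
By Proposition 3.5 each $f(K)$ is weakly strongly compact, so $\mathord{\uparrow} f(K)\in\mathcal{Q}_c(Y)$. Since $f$ is monotone and $\mathcal{F}$ is filtered, $\mathcal{G}$ is filtered: given $K_1,K_2\in\mathcal{F}$, choose $K_3\subseteq K_1\cap K_2$ in $\mathcal{F}$; then $\mathord{\uparrow} f(K_3)\subseteq \mathord{\uparrow} f(K_1)\cap\mathord{\uparrow} f(K_2)$. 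Also $f(A)\neq\emptyset$.

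\emph{Membership} $\overline{f(A)}\in\mathcal{M}(\mathcal{G})$. Fix $K\in\mathcal{F}$. Since $\overline{A}\cap K\neq\emptyset$, we get
\[
\emptyset\neq f(\overline{A}\cap K)\subseteq f(\overline{A})\cap f(K)\subseteq \overline{f(A)}\cap \mathord{\uparrow} f(K),
\]
using continuity in the form $f(\overline{A})\subseteq\overline{f(A)}$.

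\emph{Minimality.} Let $B\subseteq\overline{f(A)}$ be closed with $B\in\mathcal{M}(\mathcal{G})$. Then $f^{-1}(B)\cap\overline{A}$ is closed in $X$ and contained in $\overline{A}$. I claim it meets every $K\in\mathcal{F}$. We know $B$ meets $\mathord{\uparrow} f(K)$, and since $B$ is a lower set, it then meets $f(K)$. So some $k\in K$ has $f(k)\in B$, i.e.\ $K\cap f^{-1}(B)\neq\emptyset$.

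The main obstacle is that $k$ must also lie in $\overline{A}$. I will handle this by applying the argument to $K\cap\overline{A}$ in place of $K$. Set $\mathcal{F}'=\{K\cap\overline{A}:K\in\mathcal{F}\}$, and redefine $\mathcal{G}$ as $\{\mathord{\uparrow} f(K\cap\overline{A}):K\in\mathcal{F}\}$. The previous steps go through for this family:
\begin{itemize}
\item each $K\cap\overline{A}$ is weakly strongly compact by Proposition 3.6, so each member of $\mathcal{G}$ lies in $\mathcal{Q}_c(Y)$;
\item $\mathcal{F}'$ is filtered, hence so is $\mathcal{G}$;
\item each $K\cap\overline{A}$ is nonempty, so membership holds as before.
\end{itemize}
Now, for $B$ as above, $B$ meets $f(K\cap\overline{A})$, which gives $f^{-1}(B)\cap\overline{A}\cap K\neq\emptyset$ for every $K\in\mathcal{F}$. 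Hence $f^{-1}(B)\cap\overline{A}\in\mathcal{M}(\mathcal{F})$, and minimality of $\overline{A}$ gives $\overline{A}\subseteq f^{-1}(B)$. Therefore $f(A)\subseteq B$, so $\overline{f(A)}\subseteq B$, and thus $B=\overline{f(A)}$. This shows $\overline{f(A)}\in m(\mathcal{G})$, so $f(A)$ is a $\mathcal{U}_c$-set.
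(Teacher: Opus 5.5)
Your proof is correct and, once you switch to the family $\{\mathord{\uparrow} f(K\cap\overline{A}) : K\in\mathcal{F}\}$, it is essentially the paper's argument: the same witness $\mathcal{G}$, justified by Propositions 3.6, 3.3 and 3.5, and the same membership and minimality steps via $f^{-1}(B)\cap\overline{A}\in\mathcal{M}(\mathcal{F})$. You also state explicitly that a closed (hence lower) set meeting $\mathord{\uparrow} f(K\cap\overline{A})$ must meet $f(K\cap\overline{A})$, which the paper leaves implicit, and you were right to drop the naive family $\{\mathord{\uparrow} f(K)\}$, since minimality can genuinely fail for it.
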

\begin{proof}
     Suppose $X$ and $Y$ are $T_0$ spaces and $f: X \to Y$ is continuous. Let $A \subseteq X$ be a $\mathcal{U}_c$-set. Then there exists a filtered family $\mathcal{F} \subseteq \mathcal{Q}_c(X)$ such that $\overline{A} \in m(\mathcal{F})$.

Claim 1:
$\mathcal{G} := \left\{ \mathord{\uparrow} f\big( A_i \cap \overline{A} \big) : A_i \in \mathcal{F} \right\}$
is contained in $\mathcal{Q}_c(Y)$. 

By Proposition 3.6, the set \(A_i \cap \overline{A}\) is weakly strongly compact. Consequently, by Propositions 3.3 and 3.5, \(\mathord{\uparrow} f(A_i \cap \overline{A})\) is weakly strongly compact as well.

Claim 2: $\overline{f(A)} \in m(\mathcal{G})$.

For each $A_i \in \mathcal{F}$, since $\overline{A} \cap A_i \neq \emptyset$, there exists some $a \in \overline{A} \cap A_i$. Thus $f(a) \in f(\overline{A}) \subseteq \overline{f(A)}$. Hence,
\[
f(a) \in \overline{f(A)} \cap f(A_i \cap \overline{A}),
\]
It follows that $\overline{f(A)} \in M(\mathcal{G})$.

Now suppose $C \subseteq \overline{f(A)}$ and $C \in M(\mathcal{G})$. Then for every $A_i \in \mathcal{F}$,
\[
C \cap f(A_i \cap \overline{A}) \neq \emptyset.
\]
It follows that
\[
f^{-1}(C) \cap \overline{A} \cap A_i \neq \emptyset
\]
for all $A_i \in \mathcal{F}$. Hence, $f^{-1}(C) \cap \overline{A} \in M(\mathcal{F})$. By the minimality of $\overline{A}$ in $M(\mathcal{F})$, we obtain
\[
\overline{A} = f^{-1}(C) \cap \overline{A}.
\]
Thus $\overline{A} \subseteq f^{-1}(C)$, which implies $f(\overline{A}) \subseteq C$, so $\overline{f(A)} \subseteq C$. Therefore, $C = \overline{f(A)}$; consequently, $\overline{f(A)} \in m(\mathcal{G})$.

Obviously, $\mathcal{G}$ is filtered. We conclude taht $f(A)$ is a $\mathcal{U}_c$-set in $Y$.
   
\end{proof}

In the following theorem, we give some equivalent characterizations of \(\mathcal{U}_c\)-admitting spaces.
\begin{theorem}
    For every $T_0$ space $X$, the following statements are equivalent:
    \begin{enumerate}
    \item \( X \) is \( \mathcal{U}_c \)-admitting.
    \item $\mathcal{F} \subseteq \mathcal{Q}_c(X)$ a filtered family, and $U \in \mathcal{O}(X)$. If $\bigcap \mathcal{F} \subseteq U$, then there exists some $K \in \mathcal{F}$ such that $K \subseteq U$.
    \item $\mathcal{F} \subseteq \mathcal{Q}_c(X)$ a filtered family, and $A$ is closed. If $A \cap K \neq \emptyset$ for every $K \in \mathcal{F}$, then, $A \cap \bigcap \mathcal{F} \neq \emptyset$.
    \item For each \( \mathcal{U}_c \)-set \( A \subseteq X \) and \( U \in \mathcal{O}(X) \), \( \bigcap_{a \in A} \mathord{\uparrow} a \subseteq U \) implies \( \mathord{\uparrow} a \subseteq U \) for some \( a \in A \).
    \item For each closed \( \mathcal{U}_c \)-set \( A \subseteq X \) and \( U \in \mathcal{O}(X) \), \( \bigcap_{a \in A} \mathord{\uparrow} a \subseteq U \) implies \( \mathord{\uparrow} a \subseteq U \) for some \( a \in A \).
\end{enumerate}
\end{theorem}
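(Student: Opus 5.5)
I will prove the cycle of implications (1)$\Rightarrow$(3)$\Rightarrow$(2)$\Rightarrow$(1), and separately (1)$\Rightarrow$(4)$\Rightarrow$(5)$\Rightarrow$(1). The arguments mirror the standard ones for well-filtered spaces (with $\mathcal{Q}(X)$ replaced by $\mathcal{Q}_c(X)$) and for $\mathcal{U}_s$-admitting spaces in \cite{Shen2025}.

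\emph{(1)$\Rightarrow$(3).} Let $A$ be closed with $A\in\mathcal{M}(\mathcal{F})$. By the topological Rudin lemma (Lemma 2.5), $A$ contains some $B\in m(\mathcal{F})$. Then $B=\overline{B}$, so $B$ is a $\mathcal{U}_c$-set. By (1), $B=\overline{\{x\}}$ for some $x$. For each $K\in\mathcal{F}$ we have $K\cap\mathord{\downarrow}x\neq\emptyset$, and since $K$ is an upper set, $x\in K$. Hence $x\in A\cap\bigcap\mathcal{F}$.

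\emph{(3)$\Rightarrow$(2).} Suppose $K\not\subseteq U$ for all $K\in\mathcal{F}$. Then $A=X\setminus U$ is closed and meets every $K$, so by (3) it meets $\bigcap\mathcal{F}$. This contradicts $\bigcap\mathcal{F}\subseteq U$.

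\emph{(2)$\Rightarrow$(1).} Let $A$ be a $\mathcal{U}_c$-set with $\overline{A}\in m(\mathcal{F})$. First I claim $\overline{A}\cap\bigcap\mathcal{F}\neq\emptyset$. Otherwise $\bigcap\mathcal{F}\subseteq X\setminus\overline{A}$, so by (2) some $K\in\mathcal{F}$ lies in $X\setminus\overline{A}$, which contradicts $\overline{A}\in\mathcal{M}(\mathcal{F})$. Pick $x$ in this intersection. Then $\overline{\{x\}}=\mathord{\downarrow}x\subseteq\overline{A}$, and $\mathord{\downarrow}x$ meets every member of $\mathcal{F}$. Minimality of $\overline{A}$ then gives $\overline{A}=\overline{\{x\}}$.

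\emph{(1)$\Rightarrow$(4).} For a $\mathcal{U}_c$-set $A$ we have $\overline{A}=\overline{\{x\}}$. I claim $x\in\bigcap_{a\in A}\mathord{\uparrow}a$ fails only trivially, i.e.\ it actually holds: each $a\in A$ satisfies $a\le x$, so $x\in\bigcap_{a\in A}\mathord{\uparrow}a\subseteq U$. Since $U$ is open and $x\in\overline{A}$, $U$ meets $A$ at some $a$. Moreover $\mathord{\uparrow}a\subseteq U$ because $U$ is an upper set.

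\emph{(4)$\Rightarrow$(5).} This is immediate, since (5) is the special case of (4) for closed sets.

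\emph{(5)$\Rightarrow$(1).} This is the step I expect to need the most care. Let $A$ be a $\mathcal{U}_c$-set. Then $\overline{A}$ is also a $\mathcal{U}_c$-set, witnessed by the same $\mathcal{F}$, and it is closed, so I may assume $A$ is closed with $A\in m(\mathcal{F})$. I first want to show $\bigcap_{a\in A}\mathord{\uparrow}a\cap A\neq\emptyset$. Suppose instead that $\bigcap_{a\in A}\mathord{\uparrow}a\subseteq X\setminus A$. Then (5) yields some $a\in A$ with $\mathord{\uparrow}a\subseteq X\setminus A$, which is absurd since $a\in\mathord{\uparrow}a\cap A$. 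So there is $x\in A$ above every $a\in A$. Then $A\subseteq\mathord{\downarrow}x\subseteq A$, giving $A=\overline{\{x\}}$. Applied to the closure, this establishes (1), so irreducibility of minimal members is not even needed.
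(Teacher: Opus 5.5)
Your proof is correct, but it is organized differently from the paper's. The paper proves the single cycle $2\Rightarrow1\Rightarrow5\Rightarrow4\Rightarrow2$, and treats $2\Leftrightarrow3$ as a one-line contrapositive. It invokes the topological Rudin lemma in $4\Rightarrow2$: it takes a minimal closed $A\subseteq X\setminus U$ meeting every member of $\mathcal{F}$, observes that $\bigcap_{a\in A}\mathord{\uparrow}a\subseteq\bigcap\mathcal{F}\subseteq U$, and applies (4). Its $5\Rightarrow4$ is a contradiction argument passing from $A$ to $\overline{A}$.

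You instead run two cycles through (1). You use the Rudin lemma in $(1)\Rightarrow(3)$; there, $B$ is nonempty, and hence a $\mathcal{U}_c$-set, because the filtered family $\mathcal{F}$ is nonempty. Your $(1)\Rightarrow(4)$ handles non-closed $A$ directly via $x\in\overline{A}$ and openness of $U$. Your $(5)\Rightarrow(1)$ is direct: you apply (5) with $U=X\setminus A$, which forces the closed $\mathcal{U}_c$-set $A$ to have a largest element.

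Both proofs use the Rudin lemma exactly once, to pass from a point-closure condition to a filtered-family condition. Your arrangement has the advantage of making transparent that (1), (4) and (5) are equivalent by elementary order-theoretic reasoning alone. That argument would work for any family of sets closed under taking closures, so the real content of the theorem lies only in linking these conditions with (2) and (3). The paper's arrangement instead ties (4) directly to (2), and then gets (3) from (2) essentially for free.
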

\begin{proof}
    2 $\Rightarrow$ 1: Suppose $A$ is a $\mathcal{U}_c$-set in $X$. Then there exists a filtered family $\mathcal{F} = \{K_i : i \in I\} \subseteq \mathcal{Q}_c(X)$ such that $\overline{A} \in m(\mathcal{F})$. We claim that $\overline{A} \cap \bigcap \mathcal{F} \neq \emptyset$. Suppose not,  $\bigcap \mathcal{F} \subseteq X \setminus \overline{A}$. It follows that there exists some $K_i \in \mathcal{F}$ such that $K_i \subseteq X \setminus \overline{A}$, which contradicts $K_i \cap \overline{A} \neq \emptyset$. Hence, $\overline{A} \cap \bigcap \mathcal{F} \neq \emptyset$. Take a point \( x \in \overline{A} \cap \bigcap \mathcal{F} \). Since \( x \in \overline{A} \), we have \( \mathord{\downarrow} x \subseteq \overline{A} \). Because \( x \in K_i \) for every \( i \in I \), it follows that \( \mathord{\downarrow} x \cap K_i \neq \emptyset \) for all \( i \). So \( \mathord{\downarrow} x \in M(\mathcal{F}) \). By the minimality of \( \overline{A} \), we conclude that \( \overline{A} = \mathord{\downarrow} x \).
    
    1 $\Rightarrow$ 5: Since $A = \mathord{\downarrow} x$, we have $\bigcap_{a \in A} \mathord{\uparrow} a = \bigcap_{a \in \mathord{\downarrow} x} \mathord{\uparrow} a = \mathord{\uparrow} x \subseteq U$.

5 $\Rightarrow$ 4: Suppose that $\mathord{\uparrow} a \nsubseteq U$ for every $a \in A$. Then for each $a \in A$, we have $\mathord{\uparrow} a \cap (X \setminus U) \neq \emptyset$, which is equivalent to $a \in X \setminus U$. Hence, $A \subseteq X \setminus U$. Since $X \setminus U$ is closed, it follows that $\overline{A} \subseteq X \setminus U$. Therefore,
\[
\bigcap_{a \in \overline{A}} \mathord{\uparrow} a \subseteq U.
\]
Thus, there exists some $a \in \overline{A}$ such that $\mathord{\uparrow} a \subseteq U$. In particular, $a \in U$, so $\overline{A} \cap U \neq \emptyset$, which contradicts $\overline{A} \subseteq X \setminus U$. So there must exist some $a \in A$ such that $\mathord{\uparrow} a \subseteq U$.

4 $\Rightarrow$ 2: Suppose that for every $K \in \mathcal{F}$, we have $K \cap (X \setminus U) \neq \emptyset$. By topological Rudin lemma, there exists a minimal closed irreducible set $A \subseteq X \setminus U$ such that $A \cap K \neq \emptyset$ for all $K \in \mathcal{F}$. Since $A$ is closed and intersects every member of $\mathcal{F}$, it follows that $A$ is a $\mathcal{U}_c$-set and $\bigcap_{a \in A} \mathord{\uparrow} a \subseteq \bigcap \mathcal{F} \subseteq U$. Thus there exists some $a \in A$ such that $\mathord{\uparrow} a \subseteq U$. This implies $a \in U$, contradicting $A \subseteq X \setminus U$. Therefore, there exists some $K \in \mathcal{F}$ such that $K \subseteq U$.

2 $\Leftrightarrow$ 3: Obviously.
\end{proof}

\begin{proposition}
    For a countable $T_0$ space, it is well-filtered if and only if it is \(\mathcal{U}_c\)-admitting.
\end{proposition}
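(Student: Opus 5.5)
One direction is already available: by Corollary 3.10 every well-filtered space is \(\mathcal{U}_c\)-admitting, so only the converse needs an argument. The plan is to show that in a countable \(T_0\) space \(X\) we have \(\mathcal{Q}(X)=\mathcal{Q}_c(X)\). Once this holds, the definitions of KF-set and \(\mathcal{U}_c\)-set coincide, and so do well-filteredness and \(\mathcal{U}_c\)-admittingness. Alternatively, condition 2 of Theorem 3.13 then becomes literally the definition of well-filteredness.

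The key step is the inclusion \(\mathcal{Q}(X)\subseteq\mathcal{Q}_c(X)\); the reverse inclusion \(\mathcal{Q}_c(X)\subseteq\mathcal{Q}(X)\) holds in any space, since every weakly strongly compact set is compact and \(\mathord{\uparrow}\) of a compact set is compact and saturated.

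So take \(K\in\mathcal{Q}(X)\). Condition (i) of Definition 3.1 holds because \(K\) is compact. For condition (ii), let \(U\) be open with \(K\subseteq U\) and choose \(C=K\). This set is countable because \(X\) is countable. Since \(K\) is saturated, \(\mathord{\uparrow}C=K\), so \(K\subseteq\mathord{\uparrow}C\subseteq U\). Hence \(K\) is weakly strongly compact and \(K=\mathord{\uparrow}K\in\mathcal{Q}_c(X)\).

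With \(\mathcal{Q}(X)=\mathcal{Q}_c(X)\) established, assume \(X\) is \(\mathcal{U}_c\)-admitting. Let \(\mathcal{F}\) be a filtered family of compact saturated sets with \(\bigcap\mathcal{F}\subseteq U\) for some open \(U\). Then \(\mathcal{F}\subseteq\mathcal{Q}_c(X)\), and Theorem 3.13 (1\(\Rightarrow\)2) yields some \(K\in\mathcal{F}\) with \(K\subseteq U\). Thus \(X\) is well-filtered.

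There is no real obstacle. The only point to be careful about is that countability of \(X\) makes every subset a legitimate countable witness \(C\), which is exactly what makes the two families of compact saturated sets coincide.
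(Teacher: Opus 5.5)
Your proof is correct and follows essentially the same route as the paper: both show $\mathcal{Q}(X)=\mathcal{Q}_c(X)$ for countable $X$, and then apply Theorem 3.13 (1$\Rightarrow$2) to obtain well-filteredness. You additionally make the witness $C=K$ explicit and note that the converse direction comes from Corollary 3.10; the paper leaves both points implicit.
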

\begin{proof}
Let \(X\) be a countable \(T_0\) space. suppose \(X\) is \(\mathcal{U}_c\)-admitting. Since \(X\) is countable, every compact saturated subset of \(X\) is weakly strongly compact. Thus \(\mathcal{Q}(X) = \mathcal{Q}_c(X)\);  consequently, every filtered family of compact saturated sets is a filtered family in \(\mathcal{Q}_c(X)\). 

Now let \(\mathcal{F} \subseteq \mathcal{Q}(X)\) be a filtered family and let \(U \in \mathcal{O}(X)\) with \(\bigcap \mathcal{F} \subseteq U\). Since \(X\) is \(\mathcal{U}_c\)-admitting, by Theorem 3.13, there exists some \(K \in \mathcal{F}\) such that \(K \subseteq U\). Therefore, \(X\) is well-filtered.
\end{proof}

The following counterexample shows that our newly introduced \(\mathcal{U}_c\)-admitting spaces are meaningful.
\begin{example}\label{ex2}
    Consider \((X, \tau_0)\), where \(\omega_1\) is the first uncountable ordinal, \(X = [0, \omega_1)\) be the set of all ordinals less than \(\omega_1\)  and \(\tau_0\) is its order topology. That is, the topology is generated by the base
\[
\mathcal{B} = \{ (a, b) : a, b \in X \} \cup \{ [0, a) : a \in X \} \cup \{ (a, \omega_1) : a \in X \}.
\]
    Let \((X, \tau)\) the co-compact topology on \((X, \tau_0)\), where
$\tau = \{\emptyset\} \cup \{ X \setminus C : C \text{ is compact in } (X, \tau_0) \}$.

We verify that \((X, \tau)\) is \(\mathcal{U}_c\)-admitting but not well-filtered.

   Claim 1: In \((X, \tau_0)\), compactness is equivalent to being closed and bounded.

Let \(K \subseteq X\) be compact. Obviously, \((X, \tau_0)\) is $T_2$, so \(K\) is closed. Assume \(K\) is unbounded. Then the open cover \(\{[0, a) : a \in X\}\) of \(K\) has no finite subcover, a contradiction. Hence, \(K\) is bounded. 

Let \(K \subseteq X\) be closed and bounded. Then there exists \(a \in X\) such that \(K \subseteq [0, a]\). We prove by transfinite induction that for every \(b \le a\), the set \([0, b]\) is compact.

When \(b = 0\), the result is immediate.

Assume that for a fixed \(b\), \([0, b]\) is compact. For the successor case \(b+1\), let \(\mathcal{V}\) be an open cover of \([0, b+1]\). So \([0, b]\) has a finite subcover, and adding an open set from \(\mathcal{V}\) that contains \(b+1\), which yields a finite subcover of \([0, b+1]\). Consequently, \([0, b+1]\) is compact.

Now suppose \(b\) is a limit ordinal in \(X\) and assume that \([0, c]\) is compact for all \(c < b\). Let \(\mathcal{V}\) be an open cover of \([0, b]\). Then there exists some \(V \in \mathcal{V}\) containing \(b\). Since \(V\) is open, there is \(c \in [0, b)\) such that \([c, b] \subseteq V\). By the induction hypothesis, \([0, c]\) has a finite subcover. Together with \(V\), this gives a finite subcover of \([0, b]\). Thus \([0, b]\) is compact.

Therefore, by transfinite induction, \([0, a]\) is compact. Since \(K\) is a closed subset of \([0, a]\), it follows that $K$ is compact in \(X\).

Cliam 2: $\tau$ is a topology.

Obviously, \(\emptyset \in \tau\) and \(X = X \setminus \emptyset \in \tau\). Suppose \(X \setminus C_1, X \setminus C_2 \in \tau\), then
$(X \setminus C_1) \cap (X \setminus C_2) = X \setminus (C_1 \cup C_2) \in \tau$ because the finite unions of compact sets are compact.
Suppose \(X \setminus C_i \in \tau\) for \(i \in I\), then
$\bigcup_{i \in I} (X \setminus C_i) = X \setminus \bigcap_{i \in I} C_i \in \tau$,
due to the fact that in \((X, \tau_0)\), compact sets are exactly closed and bounded sets, and arbitrary intersections of closed and bounded sets are again closed and bounded; hence, compact.

Claim 3:  \(\mathcal{Q}_c(X, \tau) = \mathcal{Q}(X, \tau_0)\).

Since every singleton \(\{a\}\) is compact in \((X, \tau_0)\), the set \(X \setminus \{a\}\) is open in \((X, \tau)\). Thus \(\{a\}\) is closed in \((X, \tau)\). It shows that \((X, \tau)\) is \(T_1\), therefore, the weakly strongly compact saturated sets in \(\tau\) are precisely the countable and compact sets.

For any \( U \in \tau \), the set \( X \setminus U \) is compact in \((X, \tau_0)\). Since \((X, \tau_0)\) is $T_2$, \( X \setminus U \) is closed in \((X, \tau_0)\), that is, \( U \in \tau_0 \). This shows $\tau \subseteq \tau_0$.

Let \(K\) be a countable and compact set in \(\tau\). Then \(\sup K\) exists and is less than \(\omega_1\) and \(K\) is bounded. Suppose \(K\) is not closed in \((X, \tau_0)\), then there exists \(p \in \operatorname{cl}_{\tau_0}(K) \setminus K\) with \(p \le \sup K\). Since \(p\) is a countable ordinal, there exists a countable sequence \(\{x_n\}_{n \in X} \subseteq K\) converging to \(p\) in \(\tau_0\). For each \(n \in \mathbb{N}\), let \(C_n = \{p\} \cup \{x_m : m \ge n\}\). As \(p\) is a limit ordinal, each \(C_n\) is compact in \((X, \tau_0)\). Set \(V_n = X \setminus C_n \in \tau\). Now let \(y \in K\). If \(y = x_m\) for some \(m \in \mathbb{N}\), then \(y \notin C_{m+1}\), so \(y \in V_{m+1}\); if \(y \notin \{x_n : n \in \mathbb{N}\}\), then \(y \notin C_0\). In either case, \(y \in V_0\), hence,
$K \subseteq \bigcup_{n \in \mathbb{N}} V_n$.
For any finite subfamily of \(\{V_n : n \in \mathbb{N}\}\), let \(n\) be the largest index among them. Then the union of these finitely many \(V_n\)'s is just \(V_n\), but \(x_n \notin V_n\), it means that \(\{V_n : n \in \mathbb{N}\}\) has no finite subcover of \(K\), contradicting the compactness of \(K\) in \((X, \tau)\). So \(K\) is closed in \((X, \tau_0)\); therefore, \(K\) is compact in \((X, \tau_0)\).

Let \(K\) be a compact set in \((X, \tau_0)\). Since \(\tau \subseteq \tau_0\), \(K\) is also compact in \((X, \tau)\). Moreover, \(K\) is closed and bounded in \((X, \tau_0)\), so it is countable. Hence, \(K\) is a countable and compact set in \((X, \tau)\).

 Thus, \(\mathcal{Q}_c(X, \tau) = \mathcal{Q}(X, \tau_0)\).

Claim 4: \((X, \tau)\) is \(\mathcal{U}_c\)-admitting.

 Suppose \(\mathcal{F} = \{K_i:i \in I\} \subseteq \mathcal{Q}_c(X, \tau)\) is a filtered family and \(U \in \tau\) such that \(\bigcap \mathcal{F} \subseteq U\). If \(U = X\), then clearly, \(K_i \subseteq U\) for any \(i \in I\). Now assume \(U \neq X\). Since \(\tau \subseteq \tau_0\), we have \(U \in \tau_0\). By Claim 3, \(\mathcal{F}\) is also a filtered family in \(\mathcal{Q}(X, \tau_0)\). As \((X, \tau_0)\) is $T_2$, it is well-filtered. So there exists some \(K_i \in \mathcal{F}\) such that \(K_i \subseteq U\). It follows that \((X, \tau)\) is \(\mathcal{U}_c\)-admitting.

Claim 5: \((X, \tau)\) is not well-filtered.

We first prove that every closed set in \((X, \tau_0)\) is compact in \((X, \tau)\). Let \(K\) be closed in \((X, \tau_0)\) and let \(\{U_i : i \in I\} \subseteq \tau\) cover \(K\). If \(K = \emptyset\), it is trivially compact. Otherwise, pick \(U_{i_0} = X \setminus C_{i_0} \in \tau\). Then \(K \setminus U_{i_0} = K \cap C_{i_0}\) is compact in \((X, \tau_0)\), so it has a finite subcover. Adding \(U_{i_0}\) yields a finite subcover of \(K\). Thus \(K\) is compact in \((X, \tau)\).

For each \(a \in X\), the set \([a, \omega_1)\) is closed in \((X, \tau_0)\); hence it is compact in \((X, \tau)\). The family
$\{ [a, \omega_1) : a \in X \}$
is filtered, but its intersection
$\bigcap_{a \in X} [a, \omega_1) = \emptyset$.
Therefore, \((X, \tau)\) is not well-filtered.

\end{example}

We now turn to the relationship between \(\mathcal{U}_c\)-admitting spaces and sober spaces.
\begin{remark}
    For a \(T_0\) space, \(\mathcal{U}_c\)-admitting property together with local compactness, or with core compactness, does not imply sobriety.
\end{remark}
In Example 3.15, the space \((X, \tau)\) is \(\mathcal{U}_c\)-admitting but not well-filtered, hence not sober. We now show that it is locally compact; as a consequent, it is also core compact.
\begin{proof}
    Cliam 1: In \((X, \tau)\), \((a, \omega_1)\) is always compact for every \(a \in X\).

    Suppose \((a, \omega_1) \subseteq \bigcup_{i \in I} U_i\), where each \(U_i = X \setminus C_i \in \tau\) and \(C_i\) is compact in \(\tau_0\). Choose some \(U_{i_0}\) with \(i_0 \in I\). Then
$(a, \omega_1) \setminus U_{i_0} = (a, \omega_1) \cap C_{i_0}$.
Since \((a, \omega_1) = X \setminus [0, a+1)\), it is closed in \(\tau_0\). Thus \((a, \omega_1) \cap C_{i_0}\) is compact in \((X, \tau_0)\). Because \(\tau \subseteq \tau_0\), this set is also compact in \((X, \tau)\). So it has a finite subcover from \(\{U_i : i \in I\}\). Together with \(U_{i_0}\), this yields a finite subcover of \((a, \omega_1)\). Therefore, \((a, \omega_1)\) is compact in \((X, \tau)\).

Claim 2: In \((X, \tau)\), \((a, \omega_1)\) is open for every \(a \in X\).

By the proof of Example 3.15, \([0, a]\) is compact in \((X, \tau)\). Hence, \(X \setminus [0, a]\) is open in \(\tau\). Since
$(a, \omega_1) = X \setminus [0, a]$,
it follows that \((a, \omega_1) \in \tau\).

Claim 3: \((X, \tau)\) is locally compact.

Let \(a \in X\) and \(U \in \tau\) with \(a \in U\). Since \(X \setminus U\) is compact in \((X, \tau_0)\), it is bounded. Thus there is \(b \in X\) such that \((b, \omega_1) \subseteq U\).

If \(a\) is not a limit ordinal, set \(K = (a-1, a+1) \cup (b, \omega_1)\). By Claim 1, \((b, \omega_1)\) is compact, and \((a-1, a+1)\) is finite, so \(K\) is compact in \(\tau\). Also \(X \setminus K = [0, a] \cup [a+1, b]\) and is compact in \(\tau_0\), hence, \(K \in \tau\). Thus \(a \in \operatorname{int}K \subseteq K \subseteq U\).

If \(a\) is a limit ordinal, since \(X \setminus U\) is closed in \(\tau_0\), there exists \(c \in X\) such that \((c, a+1) \subseteq U\); otherwise, a sequence from \(X \setminus U\) would converge to \(a\), forcing \(a \in X \setminus U\). Then the set \((c, a+1)\) is compact in \(\tau_0\), hence, it is compact in \(\tau\). Let \(K = (c, a+1) \cup (b, \omega_1)\). Then \(K\) is compact, open, and satisfies \(a \in \operatorname{int}K \subseteq K \subseteq U\). So \((X, \tau)\) is locally compact.
\end{proof}
In analogy with the well-filtered setting, we shall examine under what additional  conditions, a \(\mathcal{U}_c\)-admitting space is sober. Below we provide two sufficient conditions.
\begin{definition}
    A topological space \(X\) is called \emph{locally countably compact} if for every point \(x \in X\) and every open neighbourhood \(U\) of \(x\), there exists a countable set \(C \subseteq X\) such that
$x$ $\in \operatorname{int}(\mathord{\uparrow} C) \subseteq \mathord{\uparrow} C \subseteq U$.
\end{definition}

\begin{remark}
    Every locally finitely compact space is locally countably compact, and every locally countably compact space is locally compact.
\end{remark}

\begin{theorem}
    Every locally countably compact \(\mathcal{U}_c\)-admitting space is sober.
\end{theorem}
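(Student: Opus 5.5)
We take an irreducible closed set $A\subseteq X$ and show that $A$ is the closure of a point. The idea is to exhibit $A$ as a member of $m(\mathcal{F})$ for a suitable filtered family $\mathcal{F}\subseteq\mathcal{Q}_c(X)$. Then $A$ is a $\mathcal{U}_c$-set and, since $X$ is $\mathcal{U}_c$-admitting, $A=\overline{\{x\}}$ for some $x$. This is the countable analogue of the known argument that locally finitary compact $\mathcal{U}_s$-admitting spaces are sober (Remark 2.16 / \cite{Shen2025}) and that locally compact well-filtered spaces are sober (Lemma 2.7).

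\textbf{Step 1: the family.} Let
\[
\mathcal{F}=\{\mathord{\uparrow} C : C\subseteq X \text{ countable},\ A\cap \operatorname{int}(\mathord{\uparrow} C)\neq\emptyset,\ \mathord{\uparrow} C\in\mathcal{Q}_c(X)\}.
\]
Nonemptiness of $\mathcal{F}$ comes from local countable compactness applied to a point $a\in A$ and $U=X$.

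\textbf{Step 2: each $\mathord{\uparrow} C$ lies in $\mathcal{Q}_c(X)$.} Condition (ii) of weak strong compactness holds trivially for $\mathord{\uparrow} C$ with witness $C$. Compactness of $\mathord{\uparrow} C$ for countable $C$ is not automatic, so I define $\mathcal{F}$ using only those $\mathord{\uparrow} C$ that are compact. I expect compactness to be the main obstacle. The fallback is to interpret local countable compactness (via Remark 3.18, ``locally countably compact $\Rightarrow$ locally compact'') as implicitly providing compact $\mathord{\uparrow} C$. If needed, I would refine by intersecting with a compact saturated neighbourhood $K$ from local compactness: replace $\mathord{\uparrow} C$ by $\mathord{\uparrow}C\cap K$, which is compact if $\mathord{\uparrow} C$ is closed-ish. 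The first attempt is simply to require compactness in the definition of $\mathcal{F}$ and check that enough such sets exist.

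\textbf{Step 3: $\mathcal{F}$ is filtered.} Given $\mathord{\uparrow} C_1,\mathord{\uparrow} C_2\in\mathcal{F}$, both $A\cap\operatorname{int}\mathord{\uparrow} C_1$ and $A\cap\operatorname{int}\mathord{\uparrow} C_2$ are nonempty. Since $A$ is irreducible, $A$ meets the open set $W=\operatorname{int}\mathord{\uparrow} C_1\cap\operatorname{int}\mathord{\uparrow} C_2$; pick $a\in A\cap W$. Local countable compactness at $a$ inside $W$ gives a countable $C_3$ with $a\in\operatorname{int}\mathord{\uparrow} C_3\subseteq\mathord{\uparrow} C_3\subseteq W$, and $\mathord{\uparrow} C_3$ is a lower bound of both in $\mathcal{F}$.

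\textbf{Step 4: $A\in m(\mathcal{F})$.} Each member of $\mathcal{F}$ meets $A$, so $A\in\mathcal{M}(\mathcal{F})$. For minimality, let $B\subseteq A$ be closed with $B\in\mathcal{M}(\mathcal{F})$, and suppose $a\in A\setminus B$. Then $X\setminus B$ is an open neighbourhood of $a$. Local countable compactness gives $C$ with $a\in\operatorname{int}\mathord{\uparrow} C\subseteq\mathord{\uparrow} C\subseteq X\setminus B$, so $\mathord{\uparrow} C\in\mathcal{F}$ but $\mathord{\uparrow} C\cap B=\emptyset$, a contradiction. Hence $B=A$, so $A=\overline{A}$ is a $\mathcal{U}_c$-set and therefore $A=\overline{\{x\}}$. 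Uniqueness of $x$ follows from $T_0$.
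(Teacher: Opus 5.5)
\textbf{There is a genuine gap, and it is the one you flagged in Step 2 without closing it.} Your $\mathcal{F}$ admits only those $\mathord{\uparrow} C$ that are compact. Definition 3.17 supplies only a countable $C$ with $x\in\operatorname{int}(\mathord{\uparrow} C)\subseteq\mathord{\uparrow} C\subseteq U$; nothing in it makes $\mathord{\uparrow} C$ compact. So three claims are unjustified: that $\mathcal{F}$ is nonempty (Step 1), that $\mathord{\uparrow} C_3\in\mathcal{F}$ (Step 3), and that the set $\mathord{\uparrow} C$ missing $B$ lies in $\mathcal{F}$ (Step 4). This is not a technicality. $\mathbb{Q}$ with the Euclidean topology satisfies Definition 3.17: take $C=U$, since in a $T_1$ space $\mathord{\uparrow} C=C$. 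Yet no compact subset of $\mathbb{Q}$ has nonempty interior, so your $\mathcal{F}$ is empty and the argument already fails at Step 1. Your fallbacks do not help. Remark 3.18 is false for the definition as written, since $\mathbb{Q}$ is not locally compact. And even when a compact saturated neighbourhood $K$ exists, $\mathord{\uparrow} C\cap K$ need not be compact, because $\mathord{\uparrow} C$ need not be closed.

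The paper's proof rests on the same unstated assumption. It uses $\mathcal{F}=\{\mathord{\uparrow} F: F \text{ countable},\ \operatorname{int}(\mathord{\uparrow} F)\cap A\neq\emptyset\}$, applies the topological Rudin lemma, and declares the resulting minimal $C$ a $\mathcal{U}_c$-set. Both steps need $\mathcal{F}\subseteq\mathcal{Q}_c(X)$. In fact no proof can avoid this under the literal Definition 3.17. Every countable $T_0$ space is then locally countably compact (take $C=U$), and well-filtered spaces are $\mathcal{U}_c$-admitting (Corollary 3.10). So the statement would make every countable well-filtered space sober. As far as I recall from the literature, this fails: Miao, Xi, Li and Zhao constructed a countable complete lattice whose Scott space is not sober, and Xi and Lawson showed that Scott spaces of complete lattices are well-filtered. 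The missing ingredient is therefore a hypothesis: the countable $C$ in Definition 3.17 must be chosen with $\mathord{\uparrow} C$ compact, which is what Remark 3.18 tacitly presupposes.

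\textbf{Under that corrected reading your proof works and is more direct than the paper's.} Steps 1, 3 and 4 go through verbatim. You check at once that $A$ itself is minimal in $\mathcal{M}(\mathcal{F})$, so $A$ is the $\mathcal{U}_c$-set. The paper instead extracts a minimal closed $C\subseteq A$ by the topological Rudin lemma, then uses $\mathcal{U}_c$-admittingness to get $x_0\in A\cap\bigcap\mathcal{F}$. It then proves $A\subseteq\mathord{\downarrow} x_0$ by the same neighbourhood argument you use for minimality.
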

\begin{proof}
    Let $A$ be a nonempty irreducible closed subset of $X$. Define
\[
\mathcal{F} = \{ \mathord{\uparrow} F : F \subseteq X \text{ is countable and } \operatorname{int}(\mathord{\uparrow} F) \cap A \neq \emptyset \}.
\]

We first show that $\mathcal{F}$ is filtered. Since $A \neq \emptyset$, choose $x \in A$. As $X$ is locally countably compact, there exists a countable set $F \subseteq X$ such that $x \in \operatorname{int}(\mathord{\uparrow} F) \subseteq \mathord{\uparrow} F$.
Hence, $\operatorname{int}(\mathord{\uparrow} F) \cap A \neq \emptyset$, so $\mathcal{F} \neq \emptyset$.
Now take $\mathord{\uparrow} F_1, \mathord{\uparrow} F_2 \in \mathcal{F}$. Then
\[
\operatorname{int}(\mathord{\uparrow} F_1) \cap A \neq \emptyset
\quad\text{and}\quad
\operatorname{int}(\mathord{\uparrow} F_2) \cap A \neq \emptyset.
\]
Since $A$ is irreducible, it follows that
\[
A \cap \operatorname{int}(\mathord{\uparrow} F_1) \cap \operatorname{int}(\mathord{\uparrow} F_2) \neq \emptyset.
\]
Pick $x \in A \cap \operatorname{int}(\mathord{\uparrow} F_1) \cap \operatorname{int}(\mathord{\uparrow} F_2)$. By local countable compactness, there exists a countable set $F_3 \subseteq X$ such that
\[
x \in \operatorname{int}(\mathord{\uparrow} F_3) \subseteq \mathord{\uparrow} F_3 \subseteq \operatorname{int}(\mathord{\uparrow} F_1) \cap \operatorname{int}(\mathord{\uparrow} F_2).
\]
Thus $\mathord{\uparrow} F_3 \in \mathcal{F}$ and $\mathord{\uparrow} F_3 \subseteq \mathord{\uparrow} F_1 \cap \mathord{\uparrow} F_2$. Therefore $\mathcal{F}$ is filtered.

By topological Rudin lemma, there exists a minimal closed subset $C \subseteq A$ such that $C \cap K \neq \emptyset$ for every $K \in \mathcal{F}$. Since $C$ is a closed set intersecting every member of $\mathcal{F}$, it is a $\mathcal{U}_c$-set. Because $X$ is $\mathcal{U}_c$-admitting, we have
$C \cap \bigcap \mathcal{F} \neq \emptyset$.
In particular, $A \cap \bigcap \mathcal{F} \neq \emptyset$. Choose $x_0 \in A \cap \bigcap \mathcal{F}$. Then $x_0 \in A$, so $\mathord{\downarrow} x_0 \subseteq A$.
Now let $y \in A$ and let $U$ be any open neighbourhood of $y$. By local countable compactness, there exists a countable set $F_y \subseteq X$ such that
\[
y \in \operatorname{int}(\mathord{\uparrow} F_y) \subseteq \mathord{\uparrow} F_y \subseteq U.
\]
Then $\mathord{\uparrow} F_y \in \mathcal{F}$, so $x_0 \in \mathord{\uparrow} F_y \subseteq{U}$. Consequently, $y \le x_0$ in the specialization order, i.e., $y \in \mathord{\downarrow} x_0$. Thus $A \subseteq \mathord{\downarrow} x_0$. Combining with $\mathord{\downarrow} x_0 \subseteq A$, we get $A = \mathord{\downarrow} x_0$. Therefore, every irreducible closed set is the closure of a singleton. So $X$ is sober.
\end{proof}

\begin{proposition}
    Every $A_1$  \(\mathcal{U}_c\)-admitting space is sober.
\end{proposition}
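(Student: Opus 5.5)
We prove directly that every irreducible closed set is the closure of a point, using first countability to build a countable filtered family of weakly strongly compact sets inside a given irreducible closed set.

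Let $A$ be a nonempty irreducible closed subset of an $A_1$ $\mathcal{U}_c$-admitting space $X$. The difficulty is that neighbourhoods $U$ of points need not contain sets of the form $\mathord{\uparrow} C$ with $C$ countable. We therefore replace the saturated neighbourhoods of the previous theorem by sets of the form $\mathord{\uparrow} C$ with $C \subseteq A$ countable.

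\textbf{Step 1 (countable target set).} Fix $x\in A$ and a countable decreasing neighbourhood base $\{U_n\}$ of $x$. By irreducibility, every finite intersection $U_{n}\cap V$ with $V$ open and meeting $A$ again meets $A$. Let $\mathcal{B}$ be the family of all open sets meeting $A$. The aim is to find a point $x_0\in A$ lying in every $V\in\mathcal{B}$; then every $y\in A$ has all its neighbourhoods containing $x_0$, so $y\le x_0$ and $A=\mathord{\downarrow}x_0$.

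\textbf{Step 2 (the filtered family).} We use the following approach, which mirrors the proof that $A_1$ well-filtered spaces are sober. For each finite chain of choices, irreducibility gives points. Concretely, take a countable base $\{W_n\}$ at a point and choose $a_n\in A\cap W_1\cap\cdots\cap W_n\cap V_1\cap\cdots\cap V_n$ along an enumeration. Set $K_n=\mathord{\uparrow}\{a_m: m\ge n\}$. Each $K_n$ is the up-closure of a countable set. Any open cover of $K_n$ contains an open set $V$ containing the "limit"; the key claim is that $\{a_m:m\ge n\}$ is compact. Since the set is countable, compactness gives weak strong compactness for free (take $C=\{a_m\}$), so $K_n\in\mathcal{Q}_c(X)$, and $\{K_n\}$ is a decreasing, hence filtered, chain. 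Each $K_n$ meets $A$. By the topological Rudin lemma, $A$ contains a minimal closed $C\in m(\{K_n\})$, which is a $\mathcal{U}_c$-set. Theorem 3.13(3) then gives $x_0\in A\cap\bigcap_n K_n$.

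\textbf{Main obstacle.} Everything hinges on constructing the sequence so that (a) each tail $\{a_m:m\ge n\}$ is compact, and (b) $\bigcap_n K_n\cap A$ forces $x_0$ above every point of $A$. The standard trick, due to Xu--Shen for $A_1$ well-filtered spaces, does not use a single base. It is a contradiction argument: suppose $A\neq\mathord{\downarrow}x$ for all $x$. For each $x\in A$, we use the countable base to pick a sequence in $A$ converging to $x$, with tails compact because every open set containing $x$ contains a tail. Alternatively, we take a sequence converging to every point of a suitable countable structure.

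Rather than redo that argument, the plan is to check that the proof of \cite[Theorem 4.2]{XuShen2020} (Lemma 2.9 here) uses only filtered families of sets of the form $\mathord{\uparrow}\{x\}\cup\mathord{\uparrow}\{a_m:m\ge n\}$ with $a_m\to x$. Such a set is compact, since any open set containing $x$ contains a tail, and it is the up-closure of a countable set, hence lies in $\mathcal{Q}_c(X)$. Thus well-filteredness is invoked only on families in $\mathcal{Q}_c(X)$, and Theorem 3.13(2) replaces it verbatim. I expect this verification, namely that only convergent-sequence compacta occur in that proof, to be the main work.
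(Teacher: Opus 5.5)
Your proposal has a genuine gap: the construction everything rests on is never actually supplied, and the versions you sketch would not work. Your reduction in Step 1 is fine. Step 2, however, tries to build one sequence $(a_n)$ in $A$ so that a point of $A\cap\bigcap_n K_n$ lies in every open set meeting $A$. The family $\mathcal{B}$ of open sets meeting $A$ is in general uncountable, since first countability gives a countable base at each point, not a countable family controlling all of $A$. So ``choose $a_n\in A\cap W_1\cap\cdots\cap W_n\cap V_1\cap\cdots\cap V_n$ along an enumeration'' cannot reach every $V\in\mathcal{B}$. Moreover, nothing in that choice makes the tails $\{a_m:m\ge n\}$ compact.

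Your fallback fails as well. Sets of the form $\mathord{\uparrow}x\cup\mathord{\uparrow}\{a_m:m\ge n\}$ with $x\in A$ all contain $x$. Hence $x\in A\cap\bigcap_n K_n$ trivially, and the $\mathcal{U}_c$-admitting property yields no information. Deferring the ``verification'' of the Xu--Shen structure leaves exactly the missing work undone, and the structure you attribute to it is not the one needed.

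The missing idea is to aim lower. Do not look for the generic point directly. Instead, show that $A$ is directed in the specialization order, then use that a $\mathcal{U}_c$-admitting space is a $d$-space (Corollary 3.10), so the closure of a directed set is a point closure.

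Given $a_1,a_2\in A$, fix decreasing countable bases $\{U_n(y)\}$ at every point $y$. Choose inductively
$c_n\in A\cap U_n(a_1)\cap U_n(a_2)\cap\bigcap_{j<n}U_n(c_j)$. This is possible by irreducibility, since each of these finitely many open sets meets $A$.

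The clause $c_k\in U_k(c_j)$ for $k>j$ makes each $c_j$ a limit of the later terms. An open set containing $c_n$ contains some $U_m(c_n)$, hence all but finitely many terms of the tail. So every tail is compact, and $K_n=\mathord{\uparrow}\{c_k:k\ge n\}\in\mathcal{Q}_c(X)$.

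The $K_n$ form a decreasing chain, each meeting $A$ (at $c_n$), and $K_n\subseteq U_n(a_1)\cap U_n(a_2)$. Theorem 3.13(3) then gives a point of
$A\cap\bigcap_n K_n\subseteq A\cap\bigcap_n U_n(a_1)\cap\bigcap_n U_n(a_2)=A\cap\mathord{\uparrow}a_1\cap\mathord{\uparrow}a_2$,
which is the required upper bound. One countable sequence suffices precisely because only two points have to be controlled at a time.
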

The proof of this proposition follows almost the same steps as that of the result in paper \cite[Theorem 4.2]{XuShen2020} that every $A_1$ well-filtered $T_0$ space is sober. However, for the convenience of readers, we include the full proof here.
\begin{proof}
    For each \( x \in X \), since \( X \) is first countable, there is an open neighborhood base \(\{U_n(x) : n \in \mathbb{N}\}\) of \( x \) such that
\[
U_1(x) \supseteq U_2(x) \supseteq \cdots \supseteq U_k(x) \supseteq \cdots,
\]
that is, \(\{U_n(x) : n \in \mathbb{N}\}\) is a decreasing sequence of open subsets.

Suppose that \( A \in \operatorname{IRR}(X) \). We show that \( \overline{A} \) is directed. Let \( a_1, a_2 \in \overline{A} \). It suffices to show that \(\mathord{\uparrow} a_1 \cap \mathord{\uparrow} a_2 \cap \overline{A} \neq \emptyset \). Since \( a_1, a_2 \in \overline{A} \),
\[
A \cap U_1(a_1) \neq \emptyset \neq A \cap U_1(a_2),
\]
and hence, \( A \cap U_1(a_1) \cap U_1(a_2) \neq \emptyset \) by the irreducibility of \( A \). Choose \( c_1 \in U_1(a_1) \cap U_1(a_2) \cap A \). Now suppose we already have a set \( \{c_1, c_2, \ldots, c_n\} \) such that for each \( 2 \leq i \leq n \),
\[
c_i \in U_i(c_1) \cap U_i(c_2) \cap \cdots \cap U_i(c_i-1) \cap U_i(a_1) \cap U_i(a_2) \cap A.
\]
Note that the above condition implies that for any positive integer \( k \),
\[
U_k(c_1) \cap U_k(c_2) \cap \cdots \cap U_k(c_n) \cap U_k(a_1) \cap U_k(a_2) \cap A \neq \emptyset.
\]
So we can choose
\[
c_{n+1} \in U_{n+1}(c_1) \cap U_{n+1}(c_2) \cap \cdots \cap U_{n+1}(c_n) \cap U_{n+1}(a_1) \cap U_{n+1}(a_2) \cap A.
\]
By induction, we can obtain a set \(\{c_n : n \in \mathbb{N}\}\).

Let \(K_n = \mathord{\uparrow} \{c_k : k \geq n\}\) for each \(n \in \mathbb{N}\).

Claim 1: \(\forall n \in \mathbb{N}, K_n\) is locally weakly strongly compact.

Suppose \(\{V_i : i \in I\}\) is an open cover of \(K_n\), i.e., \(K_n \subseteq \bigcup_{i \in I} V_i\). Then there is \(i_0 \in I\) such that \(c_n \in V_{i_0}\); thus there is \(m \geq n\) such that  
\(c_n \in U_m(c_n) \subseteq V_{i_0}\). It follows that \(c_k \in U_m(c_n) \subseteq V_{i_0}\) for all \(k \geq m\). Thus  
\(\{c_k : k \geq m\} \subseteq V_{i_0}\). For each \(c_k\), where \(n \leq k \leq m\), choose a \(V_{i_k}\) such that  
\(c_k \in V_{i_k}\). Then the finite family \(\{V_{i_k} : n \leq k \leq m\} \cup \{V_{i_0}\}\) covers \(K_n\). So \(K_n\) is compact, $\{c_k : k \geq n\}$ is countable, hence, it is locally weakly strongly compact.

Claim 2: \(\bigcap_{n \in \mathbb{N}} K_n \cap \overline{A} \neq \emptyset\).

Clearly, \(\{K_n : n \in \mathbb{N}\}\) is filtered, and \(\overline{A} \cap K_n \neq \emptyset\) for each \(n \in \mathbb{N}\). Since \(X\) is a \(\mathcal{U}_c\)-admitting space, it follows that
$\bigcap_{n \in \mathbb{N}} K_n \cap \overline{A} \neq \emptyset$.

Claim 3: \(\mathord{\uparrow} a_1 \cap \mathord{\uparrow} a_2 \cap \overline{A} \neq \emptyset\).

Note that for each \( n \in \mathbb{N} \), \(K_n = \{c_k : k \geq n\} \subseteq U_n(a_1) \cap U_n(a_2)\). This implies that
\[
\emptyset \neq \bigcap_{n \in \mathbb{N}} K_n \cap \overline{A}
\subseteq \bigcap_{n \in \mathbb{N}} U_n(a_1) \cap \bigcap_{n \in \mathbb{N}} U_n(a_2) \cap \overline{A}
= \mathord{\uparrow} a_1 \cap \mathord{\uparrow} a_2 \cap \overline{A}.
\]
Thus \(\mathord{\uparrow} a_1 \cap \mathord{\uparrow} a_2 \cap \overline{A} \neq \emptyset\),  there is \(a_3 \in \mathord{\uparrow} a_1 \cap \mathord{\uparrow} a_2 \cap \overline{A}\), that is, \(a_3 \in \overline{A}\) such that \(a_1 \leq a_3\) and \(a_2 \leq a_3\). Therefore, \(\overline{A}\) is directed. Since $X$ is \(\mathcal{U}_c\)-admitting, thus a d-space. As a result, \(\overline{A}\) is the closure of a singleton. Therefore, $X$ is sober. 
\end{proof}

\section{H-sober approach to \(\mathcal{U}_c\)-admitting spaces}
In \cite{XuHsober2021}, Xu introduced the notions of subset systems and H-sober spaces, and employed them to study in a unified manner for various properties of \(d\)-spaces, well-filtered spaces, and sober spaces. In this section, we shall also adopt the H-sober perspective to investigate the related properties of \(\mathcal{U}_c\)-admitting spaces, and establish their connections with the corresponding power spaces.

\begin{definition}(\cite[Definition 3.1]{XuHsober2021})
    A covariant functor \( H : \mathbf{Top}_0 \to \text{Set} \) is called a \emph{subset system} on \( \mathbf{Top}_0 \) (the category of \(T_0\) spaces and continuous maps) provided that the following two conditions are satisfied:
\begin{enumerate}
    \item \( S(X) \subseteq H(X) \subseteq 2^X \) (the set of all subsets of \( X \)) for each \( X \in \operatorname{ob}(\mathbf{Top}_0) \).
    \item For any continuous mapping \( f : X \to Y \) in \( \mathbf{Top}_0 \), \( H(f)(A) = f(A) \in H(Y) \) for all \( A \in H(X) \).
\end{enumerate}

A subset system \( H : \mathbf{Top}_0 \to \text{Set} \) is called an \emph{irreducible subset system}, or an \emph{\(R\)-subset system} for short, if \( H(X) \subseteq \operatorname{Irr}(X) \) for all \( X \in \operatorname{ob}(\mathbf{Top}_0) \). 
\end{definition}

\begin{definition}(\cite[Definition 4.1]{XuHsober2021})
    Let \( H : \mathbf{Top}_0 \to \text{Set} \) be an R-subset system. A \( T_0 \) space \( X \) is called \( H \)-sober if for any \( A \in H(X) \), there is a (unique) point \( x \in X \) such that \( \overline{A} = \overline{\{x\}} \) or, equivalently, if \( H_c(X) = \mathcal{S}_c(X) \). The category of all H-sober spaces with continuous mappings is denoted by \textbf{H-Sob}.
\end{definition}

\begin{example}
    \begin{enumerate}
    \item $\mathcal{D}$-spaces are exactly $\mathcal{D}$-sober spaces ($\mathcal{D}(X) = \{ A \subseteq X : A \text{ is directed} \}$).
    \item Well-filtered spaces are exactly $\mathcal{KF}$-sober spaces ($\mathcal{KF}(X) = \{ A \subseteq X : A \text{ is a KF-set} \}$).
    \item Sober spaces are exactly $\mathcal{R}$-sober spaces ($\mathcal{R}(X) = \{ A \subseteq X : A \text{ is irreducible} \}$).
    \end{enumerate}
\end{example}

\begin{proposition}
    $\mathcal{U}_s$ and $\mathcal{U}_c$ are $\mathcal{R}$-subset systems, $\mathcal{U}_s$-admitting spaces are exactly $\mathcal{U}_s$-sober spaces;  $\mathcal{U}_c$-admitting spaces are exactly $\mathcal{U}_c$-sober spaces.
\end{proposition}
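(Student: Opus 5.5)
We check the two clauses of the definition of a subset system (Definition 4.1) for $H=\mathcal{U}_c$; the case $H=\mathcal{U}_s$ runs in parallel, with ``countable'' replaced by ``finite'' and $\mathcal{Q}_c$ by $\mathcal{Q}_s$. We then check irreducibility and read off the sobriety statement.

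\emph{Singletons.} For $x\in X$, take the filtered family $\mathcal{F}=\{\mathord{\uparrow}x\}$. The set $\{x\}$ is finite, hence compact, and $\mathord{\uparrow}x\subseteq U$ for every open $U\ni x$. So $\mathord{\uparrow}x\in\mathcal{Q}_s(X)\subseteq\mathcal{Q}_c(X)$. The set $\overline{\{x\}}=\mathord{\downarrow}x$ meets $\mathord{\uparrow}x$. If $C$ is closed, $C\subseteq\mathord{\downarrow}x$ and $C\cap\mathord{\uparrow}x\neq\emptyset$, then $x\in C$ because $C$ is a lower set, so $C=\mathord{\downarrow}x$. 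Hence $\overline{\{x\}}\in m(\mathcal{F})$, and $\mathcal{S}(X)\subseteq\mathcal{U}_s(X)\subseteq\mathcal{U}_c(X)$.

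\emph{Functoriality.} For continuous $f\colon X\to Y$ and $A\in\mathcal{U}_c(X)$, Proposition 3.12 gives directly that $f(A)\in\mathcal{U}_c(Y)$. So $\mathcal{U}_c(f)(A)=f(A)$ defines a map $\mathcal{U}_c(X)\to\mathcal{U}_c(Y)$, and preservation of identities and composition is automatic because the action is by direct image. For $\mathcal{U}_s$ we repeat the proof of Proposition 3.12 verbatim, which needs two ingredients. First, for $K\in\mathcal{Q}_s(X)$ and closed $B$, the set $K\cap B$ is strongly compact: this is the argument of Proposition 3.6 with a finite set in place of a countable one. Second, continuous images of strongly compact sets are strongly compact: this is Proposition 3.5, since $f(F)$ is finite. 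Then $\{\mathord{\uparrow}f(K\cap\overline{A}):K\in\mathcal{F}\}$ is a filtered subfamily of $\mathcal{Q}_s(Y)$ having $\overline{f(A)}$ as a minimal member, exactly as in Claim 2 there. The one point to watch is that these sets are nonempty and filtered. Nonemptiness holds because $\overline{A}$ meets every $K\in\mathcal{F}$. Filteredness follows because $f(K\cap\overline A)$ is monotone in $K$. This bookkeeping is the only mildly delicate step.

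\emph{Irreducibility and sobriety.} Proposition 3.10 shows $\mathcal{U}_s(X)\subseteq\mathcal{U}_c(X)\subseteq\mathcal{KF}(X)\subseteq\mathcal{IRR}(X)$, so both are $R$-subset systems. Finally, comparing Definition 4.2 with Definition 3.8 (and with the definition of $\mathcal{U}_s$-admitting in Section 2), the two conditions are literally the same: every $A\in H(X)$ has $\overline{A}=\overline{\{x\}}$ for some $x$. Hence $\mathcal{U}_c$-admitting spaces are exactly the $\mathcal{U}_c$-sober spaces, and $\mathcal{U}_s$-admitting spaces are exactly the $\mathcal{U}_s$-sober spaces.
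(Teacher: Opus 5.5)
Your proof is correct and takes the same route as the paper: closure under continuous images via Proposition 3.12, irreducibility via the chain $\mathcal{U}_s(X)\subseteq\mathcal{U}_c(X)\subseteq\mathcal{KF}(X)\subseteq\mathcal{IRR}(X)$, and a direct comparison of definitions for the sobriety clause. You also fill in two points that the paper's one-line ``all of them'' leaves implicit: the singleton condition $\mathcal{S}(X)\subseteq\mathcal{U}_s(X)\subseteq\mathcal{U}_c(X)$ (via $\mathcal{F}=\{\mathord{\uparrow}x\}$), and the finite-set analogue of Proposition 3.12 needed for $\mathcal{U}_s$.
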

\begin{proof}
    By Proposition 3.12, the continuous image of a $\mathcal{U}_c$-set is a $\mathcal{U}_c$-set. Thus all of them satisfy the two conditions in Definition 4.1. Moreover, since every \(\mathcal{U}_s\)-set and every \(\mathcal{U}_c\)-set is irreducible, they are all \(R\)-subset systems. Obviously, $\mathcal{U}_s$-admitting spaces are exactly $\mathcal{U}_s$-sober spaces;  $\mathcal{U}_c$-admitting spaces are exactly $\mathcal{U}_c$-sober spaces.
\end{proof}

From \cite{XuHsober2021}, we know many properties of H-sober spaces. Here we list some important ones.
\begin{lemma}
    Let \( H : \mathbf{Top}_0 \to \text{Set} \) be an subset system and \( X \) be an \( H \)-sober space, the following facts hold.
    \begin{enumerate}
    \item If \( A \) is a nonempty closed subspace of \( X \), then \( A \) is \( H \)-sober. If \( U \) is a nonempty saturated subspace of \( X \), then \( U \) is \( H \)-sober.
    \item A retract of an \( H \)-sober space is \( H \)-sober.
    \item Let \(\{X_i : i \in I\}\) be a family of \(T_0\) spaces. The product space \(\prod_{i \in I} X_i\) is \(H\)-sober if and only if for each \(i \in I\), \(X_i\) is \(H\)-sober.
    \item For any \( R \)-subset system \( H : \mathbf{Top}_0 \to \text{Set} \), \( H \)-Sob is complete.
    \item Let \( H : \mathbf{Top}_0 \longrightarrow \text{Set} \) be an \( R \)-subset system. Then \textbf{H-Sob} is reflective in \(\mathbf{Top}_0\).
    \end{enumerate}
\end{lemma}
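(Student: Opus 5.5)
This lemma collects results of \cite{XuHsober2021}, so the plan is to recall how each item follows from the two defining properties of a subset system: $H(X)$ contains the singletons, and $H$ is stable under continuous images. The one additional ingredient is that closures in $T_0$ spaces behave well under continuous maps and embeddings.

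For item 1, take $A$ a nonempty closed subspace and $B\in H(A)$. The inclusion $i:A\to X$ is continuous, so $i(B)=B\in H(X)$. By $H$-sobriety there is $x\in X$ with $\operatorname{cl}_X B=\overline{\{x\}}$. Because $A$ is closed, $x\in \operatorname{cl}_X B\subseteq A$, and then $\operatorname{cl}_A B=\operatorname{cl}_X B\cap A=\operatorname{cl}_A\{x\}$.

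For a saturated subspace $U$, again push $B\in H(U)$ into $X$ and obtain $x\in X$ with $\operatorname{cl}_X B=\mathord{\downarrow}x$. This point need not lie in $U$ a priori, and I expect this to be the main obstacle. The fix is to show $x\in \mathord{\uparrow}B$. Suppose not. Since $U$ is saturated, $U=\bigcap\{V\in\mathcal O(X):U\subseteq V\}$, so pick a point $b\in B$; then $b\le x$ because $b\in\mathord{\downarrow}x$. Hence $x\in\mathord{\uparrow}B\subseteq U$ automatically. Once $x\in U$, we get $\operatorname{cl}_U B=\mathord{\downarrow}x\cap U=\operatorname{cl}_U\{x\}$.

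Item 2: let $s:X\to Y$ and $r:Y\to X$ with $r\circ s=\id_X$, and take $B\in H(X)$. Then $s(B)\in H(Y)$, so $\overline{s(B)}=\overline{\{y\}}$ for some $y\in Y$. By continuity $r(\overline{s(B)})\subseteq\overline{r s(B)}=\overline B$, hence $r(y)\in\overline B$. Conversely, $B=r s(B)\subseteq r(\overline{\{y\}})\subseteq\overline{\{r(y)\}}$. Together these give $\overline B=\overline{\{r(y)\}}$.

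Item 3: each $X_i$ is a retract of the product when the product is nonempty, using constant sections; this gives one direction by item 2. For the converse, take $B\in H(\prod X_i)$. Its projections $p_i(B)\in H(X_i)$ have generic points $x_i$. For an $R$-subset system, $B$ is irreducible, and the standard fact that an irreducible $B$ satisfies $\overline B=\prod_i\overline{p_i(B)}$ then yields $\overline B=\overline{\{(x_i)\}}$. This standard fact is the delicate step, and I would cite Xu's argument for it.

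Items 4 and 5 I would quote directly from \cite{XuHsober2021}. Completeness follows from items 1 and 3, since equalizers in $\mathbf{Top}_0$ of maps into $H$-sober spaces are closed, hence saturated, subspaces. Reflectivity comes from Xu's $H$-sobrification, built as the subspace of $P_H(X)$ consisting of closures of $H$-sets, with the universal property verified via continuous images of $H$-sets.
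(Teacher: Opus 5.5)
Your arguments for items 1--3 are correct. Here you go beyond the paper, which gives no proof at all and imports all five items from \cite{XuHsober2021}.

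\textbf{Items 1--3.} In the saturated case the only fact used is $U=\mathord{\uparrow}U$: any $b\in B\subseteq\mathord{\downarrow}x$ gives $x\in\mathord{\uparrow}b\subseteq U$. So the ``Suppose not'' and the description of $U$ as an intersection of open sets can be deleted.

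In item 3, your appeal to irreducibility is necessary, not just convenient, so the standing R-assumption of Definition 4.2 must be read into the lemma. Take the subset system $H(X)=2^X\setminus\{\emptyset\}$. The Sierpi\'nski space is $H$-sober, but its square is not, because $\{(0,1),(1,0)\}$ has non-principal closure.

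The fact $\overline B=\prod_i\overline{p_i(B)}$ for irreducible $B$ is also not delicate. Take a basic open set $\bigcap_{i\in F}p_i^{-1}(U_i)$, with $F$ finite, around a point of the right-hand side. Each $p_i^{-1}(U_i)$ meets $B$, and irreducibility gives a common point.

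\textbf{Items 4 and 5.} Quoting Xu here is exactly what the paper does. However, both justifications you attach are false and would fail as proofs.

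First, equalizers in $\mathbf{Top}_0$ are not closed in general, since that would require a closed diagonal. For the Sierpi\'nski space $\{0<1\}$, the maps $f=\id$ and $g\equiv 1$ have equalizer $\{1\}$, which is open and not closed. Also, ``closed, hence saturated'' is backwards: closed sets are lower sets, while saturated sets are upper sets.

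What is true is that $E=\{x: f(x)=g(x)\}$ is $H$-sober whenever $X$ is. For $B\in H(E)$ you get $\operatorname{cl}_X B=\mathord{\downarrow}x$ and $\overline{\{f(x)\}}=\overline{f(B)}=\overline{g(B)}=\overline{\{g(x)\}}$. Hence $f(x)=g(x)$ by $T_0$, so $x\in E$ and $\operatorname{cl}_E B=\operatorname{cl}_E\{x\}$. Together with item 3 this gives completeness. Alternatively, deduce item 4 from item 5, since $\mathbf{Top}_0$ is complete.

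Second, Xu's $H$-sobrification is not the subspace of $P_H(X)$ on $\{\overline A: A\in H(X)\}$. It is $P_H$ of the closed $H$-sober determined sets, exactly as Section 5 does with $UCD(X)$. Closures of $H$-sets alone do not suffice, as the case $H=\mathcal D$ shows:
\begin{enumerate}
\item Let $P$ be the poset generated by $x_{m,n}\le x_{m,n+1}$, $y_{m,n,k}\le y_{m,n,k+1}$, $y_{m,n,k}\le x_{m,n}$ and $y_{m,n,k}\le x_{m+1,k}$.
\item Then $x_{m,n}=\bigvee_k y_{m,n,k}$, so the Scott closures of the chains $C_m=\{x_{m,n}:n\in\mathbb N\}$ increase with $m$.
\item The only closed set containing all of them is $P$ itself.
\item $P$ is not the closure of any directed set. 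Every directed subset either has a largest element or has a cofinal part inside a single $C_m$ or a single chain $\{y_{m,n,k}\}_k$.
\end{enumerate}
Hence $\{\overline D: D\subseteq P\text{ directed}\}$, ordered by inclusion (the specialization order of the lower Vietoris topology), contains a chain with no upper bound. This space is therefore not a $d$-space, and no universal-property argument can make it the reflection.
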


By Propositions 4.4 and Lemma 4.5, we have the following conclusions.
\begin{corollary}
\mbox{}
   \begin{enumerate}
       \item Nonempty closed subspaces and nonempty saturated subspaces of a \(\mathcal{U}_c\)-admitting space are again \(\mathcal{U}_c\)-admitting.
        \item Retracts of a \(\mathcal{U}_c\)-admitting space are again \(\mathcal{U}_c\)-admitting.
        \item The product of a family of topological spaces is \(\mathcal{U}_c\)-admitting if and only if each factor is \(\mathcal{U}_c\)-admitting.
        \item \( \mathcal{U}_c \)-Sob is complete.
        \item \( \mathcal{U}_c \)-Sob is reflective in \(\mathbf{Top}_0\).
    \end{enumerate}
\end{corollary}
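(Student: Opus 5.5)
The corollary follows by direct instantiation. Proposition 4.4 identifies the $\mathcal{U}_c$-admitting spaces with the $\mathcal{U}_c$-sober spaces. In doing so it shows that $\mathcal{U}_c$ is an $R$-subset system in the sense of Definition 4.1. I will therefore apply Lemma 4.5 with $H=\mathcal{U}_c$, one item per clause.

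First I check the hypotheses of Lemma 4.5, namely that $\mathcal{U}_c$ is a subset system with values inside $\operatorname{Irr}(X)$.
\begin{enumerate}
\item Every singleton $\{x\}$ is a $\mathcal{U}_c$-set: take $\mathcal{F}=\{\mathord{\uparrow}x\}$. Here $\mathord{\uparrow}x\in\mathcal{Q}_c(X)$, witnessed by the countable set $\{x\}$, and $\overline{\{x\}}=\mathord{\downarrow}x$ is clearly minimal among closed sets meeting $\mathord{\uparrow}x$. This gives $\mathcal{S}(X)\subseteq\mathcal{U}_c(X)$.
\item Functoriality, i.e.\ $f(A)\in\mathcal{U}_c(Y)$ for continuous $f$, is Proposition 3.12.
\item The inclusion $\mathcal{U}_c(X)\subseteq\operatorname{Irr}(X)$ is Proposition 3.10, via the chain $\mathcal{U}_c(X)\subseteq\mathcal{KF}(X)\subseteq\mathcal{IRR}(X)$.
\end{enumerate}

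With these in place, items 1--5 of the corollary are items 1--5 of Lemma 4.5 read with $H=\mathcal{U}_c$. In each case the phrase ``$H$-sober'' is translated to ``$\mathcal{U}_c$-admitting'' using Proposition 4.4. This covers closed and saturated subspaces, retracts, products in both directions, completeness of $\mathbf{U_c\text{-}Sob}$, and its reflectivity in $\mathbf{Top}_0$.

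The only point needing care is one of bookkeeping, not substance. Items 4 and 5 of Lemma 4.5 require an $R$-subset system, and that is exactly what the irreducibility check in the third step above supplies. No further argument is needed. If one wanted a self-contained check of item 2, it would also follow directly: a $\mathcal{U}_c$-set $A$ in the retract $X$ has $s(A)$ a $\mathcal{U}_c$-set in $Y$, hence $\overline{s(A)}=\overline{\{y\}}$ for some $y\in Y$, and then $\overline{A}=\overline{\{r(y)\}}$ because $r\circ s=\id_X$.
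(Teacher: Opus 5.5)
Your proposal is correct and is the paper's argument: the corollary comes from instantiating Lemma 4.5 with $H=\mathcal{U}_c$ via Proposition 4.4, and your explicit check that $\mathcal{S}(X)\subseteq\mathcal{U}_c(X)$ (via $\mathcal{F}=\{\mathord{\uparrow}x\}$) usefully supplies a hypothesis of Definition 4.1 that the paper's proof of Proposition 4.4 leaves implicit. One small citation slip: the chain $\mathcal{U}_c(X)\subseteq\mathcal{KF}(X)\subseteq\mathcal{IRR}(X)$ is Proposition 3.9, not Proposition 3.10, which is the corollary on implications between classes of spaces.
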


In \cite{XuHsober2021}, the relationship between \(\mathcal{U}_s\)-admitting spaces and their power spaces was not investigated. However, in \cite{Shen2025}, the relationship between \(\mathcal{U}_s\)-admitting spaces and their power spaces was given.

For a topological space \( X \), the \emph{upper topology} on \( \mathcal{Q}_s^*(X) = \mathcal{Q}_s(X) \setminus \{\emptyset\} \) is generated by the family as a base
\[
\Box U = \{ K \in \mathcal{Q}_s^*(X) : K \subseteq U \},
\]
where \( U \) ranges over the open subsets of \( X \). The resulting space, denoted by \( P_\text{us}(X) \), is called the \emph{$\mathcal{U}_s$-Smyth power space} of $X$.

\begin{proposition}(\cite[Theorem 4.3]{Shen2025})
    Let \( X \) be a \( T_0 \) space. Then the following conditions are equivalent:
\begin{enumerate}
    \item \( P_\text{us}(X) \) is \( \mathcal{U}_s \)-admitting.
    \item \( P_\text{us}(X) \) is a \( d \)-space.
    \item \( X \) is \( \mathcal{U}_s \)-admitting.
\end{enumerate}
\end{proposition}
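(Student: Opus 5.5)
This is Shen's result, cited from \cite[Theorem 4.3]{Shen2025}; my plan follows the cycle $(1)\Rightarrow(2)\Rightarrow(3)\Rightarrow(1)$, with $(1)\Rightarrow(2)$ immediate from Corollary 3.11, since every $\mathcal{U}_s$-admitting space is a $d$-space.

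\textbf{Step $(2)\Rightarrow(3)$.} Assume $P_{\mathrm{us}}(X)$ is a $d$-space. By the analogue of Theorem 3.13 for $\mathcal{U}_s$, it suffices to show the following: if $\{\mathord{\uparrow}F_i\}_{i\in I}\subseteq\mathcal{Q}_s(X)$ is filtered and $\bigcap_i \mathord{\uparrow}F_i\subseteq U$ with $U$ open, then some $\mathord{\uparrow}F_i\subseteq U$.

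The specialization order on $P_{\mathrm{us}}(X)$ is reverse inclusion, so the family is directed there. Its supremum should be $K=\bigcap_i\mathord{\uparrow}F_i$, provided $K$ is a nonempty strongly compact saturated set. I would argue this as follows.
\begin{itemize}
\item[(a)] $K\neq\emptyset$. If $K=\emptyset$, then $\{\mathord{\uparrow}F_i\}$ would be a directed set with no upper bound in $\mathcal{Q}_s^*(X)$; any upper bound $Q$ would satisfy $Q\subseteq K=\emptyset$. This contradicts the fact that a $d$-space is a dcpo.
\item[(b)] The dcpo supremum $Q_0$ equals $K$ and $K$ is open-cover-faithful. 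Let $Q_0$ be the supremum in the dcpo. Then $Q_0\subseteq K$. If $x\in K\setminus Q_0$, then $\mathord{\uparrow}Q_0\cup\mathord{\uparrow}x$ (strongly compact by a Proposition 3.4-style argument) is also an upper bound. This contradicts $Q_0$ being least, since $Q_0\not\supseteq$ it strictly fails. So in fact $Q_0=K$.
\end{itemize}
With $K=Q_0\in\Box U$, Scott-openness of $\Box U$ then gives some $\mathord{\uparrow}F_i\in\Box U$, which is what we need.

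Care is needed in (b). The minimal-upper-bound argument must use that $Q_0\cup\mathord{\uparrow}x$ lies below every upper bound. This holds because any upper bound $Q'$ satisfies $Q'\subseteq\mathord{\uparrow}F_i$ for all $i$, hence $Q'\subseteq K$, and in reverse inclusion that means $Q'$ is above $\mathord{\uparrow}(Q_0\cup\{x\})$ only if $Q'\subseteq Q_0\cup\mathord{\uparrow}x$. I expect this to need a rewrite: more simply, $\mathord{\uparrow}(Q_0\cup\{x\})$ is itself an upper bound, so $Q_0\le$ it means $\mathord{\uparrow}(Q_0\cup\{x\})\subseteq Q_0$, giving $x\in Q_0$.

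\textbf{Step $(3)\Rightarrow(1)$.} Let $\mathcal{A}=\{\mathcal{K}_j\}\subseteq\mathcal{Q}_s(P_{\mathrm{us}}(X))$ be filtered, with $\bigcap\mathcal{A}\subseteq\Box U$. This is the main obstacle, and I would follow Xu's proof for $P_s(X)$ (Proposition 2.10). For each $j$, set
\[
\bigcup\mathcal{K}_j=\bigcup\{Q:Q\in\mathcal{K}_j\}.
\]
Each $\mathcal{K}_j\subseteq\mathord{\uparrow}\{Q_1,\dots,Q_n\}\subseteq\Box V$-type covers, and $\mathcal{K}_j=\mathord{\uparrow}\mathcal{F}_j$ approximations show that $\bigcup\mathcal{K}_j$ is strongly compact saturated: given an open $V\supseteq\bigcup\mathcal{K}_j$, we have $\mathcal{K}_j\subseteq\Box V$, so finitely many $Q_k\in\mathcal{Q}_s$ with $\mathcal{K}_j\subseteq\mathord{\uparrow}\{Q_k\}\subseteq\Box V$, and each $Q_k\subseteq\mathord{\uparrow}G_k\subseteq V$ with $G_k$ finite.

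The family $\{\bigcup\mathcal{K}_j\}$ is filtered in $\mathcal{Q}_s(X)$. Suppose no $\mathcal{K}_j\subseteq\Box U$. Then every $\mathcal{K}_j$ meets the closed set $\Diamond(X\setminus U)$, so every $\bigcup\mathcal{K}_j$ meets $X\setminus U$.

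Next apply the topological Rudin lemma to get a minimal closed $C\subseteq X\setminus U$ meeting all the $\bigcup\mathcal{K}_j$; $C$ is a $\mathcal{U}_s$-set, so $C=\mathord{\downarrow}x$. Then every $\mathcal{K}_j$ contains some $Q$ with $x\in Q$, i.e.\ $\mathord{\uparrow}x\in\mathcal{K}_j$. Hence $\mathord{\uparrow}x\in\bigcap\mathcal{A}\subseteq\Box U$, so $x\in U$, contradicting $x\in C\subseteq X\setminus U$.

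The remaining point is showing that $Q\in\mathcal{K}_j$ with $x\in Q$ forces $\mathord{\uparrow}x\in\mathcal{K}_j$. This holds because $\mathcal{K}_j$ is saturated in $P_{\mathrm{us}}(X)$ and $\mathord{\uparrow}x\subseteq Q$. Here minimality of $C$ is used to guarantee that $x$ itself, and not just some point of $\mathord{\downarrow}x$, lies in each $\bigcup\mathcal{K}_j$: by the proof of Theorem 3.13 ($2\Rightarrow1$), the point $x$ lies in $C\cap\bigcap_j\bigcup\mathcal{K}_j$.
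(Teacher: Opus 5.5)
\textbf{The gap is in $(3)\Rightarrow(1)$.} Your implications $(1)\Rightarrow(2)$ and $(2)\Rightarrow(3)$ are correct. In (b) you do not actually need $Q_0=\bigcap_i\mathord{\uparrow}F_i$: any upper bound satisfies $Q_0\subseteq\bigcap_i\mathord{\uparrow}F_i\subseteq U$, hence $Q_0\in\Box U$, which is how the paper argues in its $\mathcal{U}_c$ analogue, Theorem 4.8.

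To apply the $\mathcal{U}_s$ version of Theorem 3.13 to the space $P_{\mathrm{us}}(X)$, you must treat \emph{every} open set $\mathcal{W}$ of $P_{\mathrm{us}}(X)$, that is, arbitrary unions $\mathcal{W}=\bigcup_k\Box U_k$. Your argument is correct for a single basic open $\Box U$, but that is all it covers. Checking the condition on a base does not give it for all open sets in general, even when the base is closed under finite intersections and contains $\emptyset=\Box\emptyset$. Your argument also does not extend. The condition $Q\notin\bigcup_k\Box U_k$ only says $Q\not\subseteq U_k$ for each $k$ separately. So $\bigcup\mathcal{K}_j$ need not meet $X\setminus\bigcup_kU_k$, and there is no closed subset of $X$ to which the Rudin lemma can be applied. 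Likewise, the witness in $P_{\mathrm{us}}(X)$ need not be of the form $\mathord{\uparrow}x$. For discrete $X=\{a,b\}$, every $\mathord{\uparrow}x$ lies in $\Box\{a\}\cup\Box\{b\}$, but $\{a,b\}$ does not. The paper gives no proof of this proposition (it is cited from Shen). Its proof of Theorem 4.8 is organized exactly like yours and has the same restriction to $\Box U$, so following it does not close the gap.

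\textbf{How to close it.} Run the Rudin lemma in $P_{\mathrm{us}}(X)$ and use a non-principal generic point.
Let $\{\mathcal{K}_j\}\subseteq\mathcal{Q}_s(P_{\mathrm{us}}(X))$ be filtered, and let $\mathcal{C}\in m(\{\mathcal{K}_j\})$, for instance taken inside $P_{\mathrm{us}}(X)\setminus\mathcal{W}$. Put $K_j=\bigcup(\mathcal{K}_j\cap\mathcal{C})$. By the strongly compact version of Proposition 3.6 and your union argument, $\{K_j\}$ is a filtered family of nonempty members of $\mathcal{Q}_s(X)$. Let $K=\bigcap_jK_j$.

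First, $K\in P_{\mathrm{us}}(X)$. Because $X$ is $\mathcal{U}_s$-admitting, $K\neq\emptyset$, and $K$ is strongly compact: any open $V\supseteq K$ contains some $K_j\subseteq\mathord{\uparrow}G\subseteq V$ with $G$ finite.

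Next, $K\in\mathcal{C}$. Otherwise some $\Box U\ni K$ is disjoint from $\mathcal{C}$. Then $K\subseteq U$ yields $K_j\subseteq U$ for some $j$, so $\mathcal{K}_j\cap\mathcal{C}\subseteq\Box U$, contradicting $\emptyset\neq\mathcal{K}_j\cap\mathcal{C}$ and $\Box U\cap\mathcal{C}=\emptyset$.

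Finally, $K\in\mathcal{K}_{j_0}$ for every $j_0$. Otherwise, saturation and compactness of $\mathcal{K}_{j_0}$ give $\mathcal{K}_{j_0}\subseteq\Box U_1\cup\dots\cup\Box U_n$ with $K\not\subseteq U_t$ for all $t$. For each $t$, the closed set $\mathcal{C}\setminus\Box U_t$ cannot miss any $\mathcal{K}_j$, since that would give $K\subseteq K_j\subseteq U_t$. By minimality it therefore equals $\mathcal{C}$. Hence $\mathcal{C}\cap\mathcal{K}_{j_0}\subseteq\mathcal{C}\cap\bigcup_t\Box U_t=\emptyset$, a contradiction.

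So $\overline{\{K\}}\subseteq\mathcal{C}$ meets every $\mathcal{K}_j$, and minimality gives $\mathcal{C}=\overline{\{K\}}$. This is exactly $\mathcal{U}_s$-admittingness of $P_{\mathrm{us}}(X)$. In the open-set formulation, it contradicts $\bigcap_j\mathcal{K}_j\subseteq\mathcal{W}$, since $K\in\mathcal{C}\cap\bigcap_j\mathcal{K}_j$.
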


Similarly, we shall prove the connections between  \(\mathcal{U}_c\)-admitting spaces and their power spaces.

For a topological space \( X \), the \emph{upper topology} on \( \mathcal{Q}_c^*(X) = \mathcal{Q}_c(X) \setminus \{\emptyset\} \) is generated by the family as a base
\[
\Box U = \{ K \in \mathcal{Q}_c^*(X) : K \subseteq U \},
\]
where \( U \) ranges over the open subsets of \( X \). The resulting space, denoted by \( P_\text{uc}(X) \), is called the \emph{$\mathcal{U}_c$-Smyth power space} of $X$.

\begin{theorem}
     Let \( X \) be a \( T_0 \) space. Then the following conditions are equivalent:
\begin{enumerate}
    \item \( P_\text{uc}(X) \) is \( \mathcal{U}_c \)-admitting.
    \item \( P_\text{uc}(X) \) is a \( d \)-space.
    \item \( X \) is \( \mathcal{U}_c \)-admitting.
\end{enumerate}
\end{theorem}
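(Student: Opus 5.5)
We prove the cycle $1\Rightarrow 2\Rightarrow 3\Rightarrow 1$. The implication $1\Rightarrow 2$ is immediate from Corollary 3.11, since every $\mathcal{U}_c$-admitting space is a $d$-space. Before the other two steps we record the basic facts about $P_{\rm uc}(X)$. Its specialization order is reverse inclusion, because $K\le K'$ iff every $\Box U$ containing $K$ contains $K'$, iff $K'\subseteq K$, using that members of $\mathcal{Q}_c^*(X)$ are saturated. The map $\xi:X\to P_{\rm uc}(X)$, $x\mapsto\mathord{\uparrow}x$, is well defined, because $\mathord{\uparrow}x$ is compact and $\{x\}$ is a countable witness. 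It is continuous, since $\xi^{-1}(\Box U)=U$.

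For $2\Rightarrow 3$ we verify condition 2 of Theorem 3.13. Let $\mathcal{F}\subseteq\mathcal{Q}_c(X)$ be filtered and $U$ open with $\bigcap\mathcal{F}\subseteq U$. If $\emptyset\in\mathcal{F}$ there is nothing to show, so assume $\mathcal{F}\subseteq\mathcal{Q}_c^*(X)$. Then $\mathcal{F}$ is directed in $P_{\rm uc}(X)$. Since $P_{\rm uc}(X)$ is a $d$-space, $\mathcal{F}$ has a supremum $K_0$ in $\mathcal{Q}_c^*(X)$ with respect to reverse inclusion, and every open $\Box U\ni K_0$ meets $\mathcal{F}$.

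The key claim is that $K_0=\bigcap\mathcal{F}$. Clearly $K_0\subseteq\bigcap\mathcal{F}$, because $K_0$ is an upper bound. For the converse, take $x\in\bigcap\mathcal{F}$. Then $\mathord{\uparrow}x\in\mathcal{Q}_c^*(X)$ is an upper bound of $\mathcal{F}$, so $K_0\le\mathord{\uparrow}x$, i.e.\ $x\in K_0$. Hence $K_0=\bigcap\mathcal{F}\subseteq U$, so $K_0\in\Box U$, and Scott openness gives some $K\in\mathcal{F}$ with $K\subseteq U$.

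If instead $\bigcap\mathcal{F}=\emptyset$, then no nonempty upper bound exists, contradicting the existence of the supremum. So this case cannot occur when $\mathcal{F}\subseteq\mathcal{Q}_c^*(X)$. Either way condition 2 holds, and $X$ is $\mathcal{U}_c$-admitting.

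For $3\Rightarrow 1$ the plan is to show that $P_{\rm uc}(X)$ satisfies condition 2 of Theorem 3.13. Let $\mathbb{F}\subseteq\mathcal{Q}_c(P_{\rm uc}(X))$ be filtered and $\mathcal{U}$ open with $\bigcap\mathbb{F}\subseteq\mathcal{U}$. Two reductions simplify this:
\begin{enumerate}
\item $\mathcal{U}$ is a union of basic sets $\Box U_j$.
\item Each $\mathcal{K}\in\mathbb{F}$ is the upper set of a countable, compact family of members of $\mathcal{Q}_c^*(X)$.
\end{enumerate}

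Following the Smyth-power-space argument of Xu and the proof of \cite[Theorem 4.3]{Shen2025}, we associate to each $\mathcal{K}\in\mathbb{F}$ the set $\bigcup\mathcal{K}\subseteq X$. We then show the following:
\begin{enumerate}
\item $\bigcup\mathcal{K}$ is weakly strongly compact. Compactness follows from the compactness of $\mathcal{K}$ in the upper topology, as in the standard lemma on $P_s$. For the countable witness, given $\bigcup\mathcal{K}\subseteq V$ we have $\mathcal{K}\subseteq\Box V$. By weak strong compactness of $\mathcal{K}$ we get a countable family $\{K_n\}\subseteq\Box V$ with $\mathcal{K}\subseteq\mathord{\uparrow}\{K_n\}$. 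Each $K_n$ then has a countable witness $C_n$ with $K_n\subseteq\mathord{\uparrow}C_n\subseteq V$, and $\bigcup_n C_n$ is countable, which gives $\bigcup\mathcal{K}\subseteq\mathord{\uparrow}\bigcup_nC_n\subseteq V$.
\item The family $\{\bigcup\mathcal{K}:\mathcal{K}\in\mathbb{F}\}$ is filtered in $\mathcal{Q}_c(X)$.
\end{enumerate}

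We then argue by contradiction. Suppose no $\mathcal{K}\in\mathbb{F}$ is contained in $\mathcal{U}$, and choose $K_{\mathcal{K}}\in\mathcal{K}\setminus\mathcal{U}$ for each $\mathcal{K}$. Using the $\mathcal{U}_c$-admitting property of $X$ together with a Rudin-type argument, we aim to produce a point of $\bigcap\mathbb{F}$ outside $\mathcal{U}$: namely a set $K^\ast=\bigcap\mathcal{K}'$ over a suitable filtered subfamily, which is not contained in any $U_j$. This is the main obstacle. One must check that such a $K^\ast$ again lies in $\mathcal{Q}_c(X)$, which is delicate because intersections need not inherit countable witnesses, and one must handle the non-compactness of $\mathcal{U}$.

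As a fallback, if this direct route stalls, we instead show that $P_{\rm uc}(X)$ is H-sober. We take a $\mathcal{U}_c$-set $\mathcal{A}$ in $P_{\rm uc}(X)$, form the closed set $A=\overline{\bigcup\mathcal{A}}$-type minimal witness in $X$ via the Rudin lemma, and identify $\mathrm{cl}(\mathcal{A})$ with $\mathord{\downarrow}K$ for $K$ obtained from the $\mathcal{U}_c$-admitting property (condition 3 of Theorem 3.13) applied in $X$. In this form the obstacle reduces to showing that the resulting $K$ belongs to $\mathcal{Q}_c^*(X)$.
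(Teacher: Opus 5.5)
Your $1\Rightarrow 2$ and $2\Rightarrow 3$ are correct and follow the paper. The paper only needs $K_0\subseteq\bigcap\mathcal{F}$; your equality $K_0=\bigcap\mathcal{F}$ and your treatment of $\emptyset\in\mathcal{F}$ are harmless extras. The genuine gap is $3\Rightarrow 1$: you stop at what you call the main obstacle, and the fallback ends at the same point. Two side remarks. Reduction (2) is false, though you never use it: weak strong compactness gives a countable set only relative to each open neighbourhood, not one countable generating family. And the family $\{\bigcup\mathcal{K}:\mathcal{K}\in\mathbb{F}\}$ is not the right one; each $\mathcal{K}$ must first be cut down to the closed part you want to stay in. The obstacle itself disappears under hypothesis 3. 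Let $\{K_d\}\subseteq\mathcal{Q}_c^*(X)$ be filtered. By Theorem 3.13(2), $K=\bigcap_d K_d$ is nonempty (take $U=\emptyset$). It is compact, because the union of an open cover of $K$ contains some compact $K_d$. Any open $V\supseteq K$ contains some $K_d$, whose countable witness also serves for $K$. So $K\in\mathcal{Q}_c^*(X)$.

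The paper avoids intersections altogether. For a basic open $\Box U$ it sets $K_i=\bigcup(\mathcal{A}_i\setminus\Box U)$, which lies in $\mathcal{Q}_c(X)$ by Proposition 3.6 plus your union argument. The Rudin lemma in $X\setminus U$ and hypothesis 3 then give a minimal closed set $\mathord{\downarrow}x$ with $x\notin U$ and $x\in K_i$ for all $i$. Saturation then gives $\mathord{\uparrow}x\in\bigcap_i\mathcal{A}_i\setminus\Box U$. This principal-filter witness depends on $\mathord{\uparrow}x\in\Box U\iff x\in U$. For a general open $\mathcal{U}=\bigcup_j\Box U_j$, a set $Q\notin\mathcal{U}$ need not contain a point outside $\bigcup_j U_j$, and the paper only treats basic $\Box U$.

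Your intersection idea is what handles the general case. Take a closed $\mathcal{A}\in m(\mathbb{F})$, for instance a Rudin-minimal closed subset of $P_{\mathrm{uc}}(X)\setminus\mathcal{U}$. Put $K_d=\bigcup(\mathcal{K}_d\cap\mathcal{A})$, which is in $\mathcal{Q}_c^*(X)$ by Proposition 3.6 and the union argument, and let $K=\bigcap_d K_d\in\mathcal{Q}_c^*(X)$.
\begin{itemize}
\item $K\in\mathcal{A}$. Otherwise $K\subseteq U_j$ for some $\Box U_j$ disjoint from $\mathcal{A}$. Then some $K_d\subseteq U_j$, so the nonempty set $\mathcal{K}_d\cap\mathcal{A}$ lies in $\Box U_j$, which is impossible.
\item $K\in\mathcal{K}_d$ for each $d$. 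Otherwise saturation and compactness give $\mathcal{K}_d\subseteq\Box V_1\cup\dots\cup\Box V_n$ with $K\not\subseteq V_k$ for every $k$. If every closed set $\mathcal{A}\setminus\Box V_k$ met all members of $\mathbb{F}$, minimality would force $\mathcal{A}\cap\mathcal{K}_d=\emptyset$. Hence $\mathcal{A}\cap\mathcal{K}_{d'}\subseteq\Box V_k$ for some $k$ and $d'$, so $K\subseteq K_{d'}\subseteq V_k$, a contradiction.
\end{itemize}
Therefore $\overline{\{K\}}=\{Q:K\subseteq Q\}\subseteq\mathcal{A}$ meets every $\mathcal{K}_d$, and minimality gives $\mathcal{A}=\overline{\{K\}}$. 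This is exactly $\mathcal{U}_c$-admittingness of $P_{\mathrm{uc}}(X)$. In the open-set formulation, $K\in\bigcap\mathbb{F}\setminus\mathcal{U}$ gives the required contradiction.
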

\begin{proof}
    1 $\Rightarrow$ 2: Obviously.

    2 $\Rightarrow$ 3: Suppose \(\mathcal{F} = \{K_i : i \in I\} \subseteq \mathcal{Q}_c(X)\) is a filtered family, \(U \in \mathcal{O}(X)\), and \(\bigcap \mathcal{F} \subseteq U\). Since  \( P_\text{uc}(X) \) is a \(d\)-space, the supremum of \(\mathcal{F}\) exists in  \( P_\text{uc}(X) \); we denote it by \(K_0\). 

Now, \(\square U\) is an open set in  \( P_\text{uc}(X) \), and as \(K_0 \subseteq \bigcap \mathcal{F} \subseteq U\), we have \(K_0 \in \square U\). Because open sets in a \(d\)-space are Scott open, and \(\mathcal{F}\) is directed with \(\sup \mathcal{F} = K_0 \in \square U\), there exists some \(i \in I\) such that \(K_i \in \square U\), i.e., \(K_i \subseteq U\). Hence, \(X\) is \(\mathcal{U}_c\)-admitting.

 3 $\Rightarrow$ 1: Suppose \(\mathcal{F} = \{\mathcal{A}_i : i \in I\}\) is a filtered family in \(\mathcal{Q}_c( P_\text{uc}(X) )\), \(U \in \mathcal{O}(X)\), and
$\bigcap \mathcal{F} \subseteq \square U$.
Assume that no \(\mathcal{A}_i\) is contained in \(\square U\). That is, for every \(i \in I\),
$\mathcal{A}_i \cap ( P_\text{uc}(X)  \setminus \square U) \neq \emptyset$.
By Proposition 3.6, $\mathcal{A}_i \cap ( P_\text{uc}(X)  \setminus \square U)$ is weakly strongly compact. Let
\[
K_i = \bigcup \big( \mathcal{A}_i \cap (P_\text{uc}(X) \setminus \square U) \big).
\]

Claim:  \(K_i\) is weakly strongly compact in $X$.

Suppose \( V \) is an open neighbourhood of \( K_i \). Then
\[
\mathcal{A}_i \cap \big( P_\text{uc}(X) \setminus \square U \big) \subseteq \square V.
\]
Thus there exists a countable family \( \mathcal{B}_i = \{ Q_n : n \in N_i \} \subseteq \mathcal{Q}_c(X) \) such that
\[
\mathcal{A}_i \cap \big( P_\text{uc}(X) \setminus \square U \big) \subseteq \mathord{\uparrow} \mathcal{B}_i \subseteq \square V.
\]
Since each \( Q_n \) is weakly strongly compact in $X$, there exists a countable set \( M_n \subseteq X \) such that
\[
Q_n \subseteq \mathord{\uparrow} M_n \subseteq V.
\]
Let \( F = \bigcup_{n \in N_i} M_n \). Then \( F \) is countable by the fact that each \( M_n \) is countable. Now for any \( K_\text{in} \subseteq \mathcal{A}_i \cap \big( P_\text{uc}(X) \setminus \square U \big) \), there exists \( Q_n \) such that
\[
K_\text{in} \subseteq \mathord{\uparrow} Q_n \subseteq \mathord{\uparrow} M_n \subseteq \mathord{\uparrow} F \subseteq V.
\]
So $K_i \subseteq \mathord{\uparrow} F \subseteq V$. Thus \( K_\text{i} \) is weakly strongly compact.

Clearly, each \(K_i\) is saturated and \(K_i \nsubseteq U\); equivalently, \(K_i \cap (X \setminus U) \neq \emptyset\) for every \(i \in I\). By topological Rudin lemma, there exists an irreducible closed set \(A \subseteq X \setminus U\) such that \(A \in m(\{K_i : i \in I\})\). This implies that \(A\) is a \(\mathcal{U}_c\)-set. Since \(X\) is \(\mathcal{U}_c\)-admitting, we have \(A = \mathord{\downarrow} x\) for some \(x \in X\). Therefore \(\mathord{\downarrow} x \subseteq X \setminus U\), so \(x \notin U\). 

However, since \(A \cap K_i \neq \emptyset\) for every \(i \in I\), we have \(\mathord{\downarrow} x \cap K_i \neq \emptyset\), which implies \(x \in K_i\) for all \(i \in I\) because \(K_i\) is saturated. Hence, \(\mathord{\uparrow} x \in \mathcal{A}_i\) for all $i \in I$. Consequently,
\[
\mathord{\uparrow} x \subseteq \bigcap_{i \in I} \mathcal{A}_i \subseteq \square U,
\]
so \(\mathord{\uparrow} x \subseteq U\), which means \(x \in U\), contradicting \(x \notin U\). Therefore, there exists some \(i \in I\) such that \(\mathcal{A}_i \subseteq \square U\). We conclude that $P_\text{uc}(X)$ is $\mathcal{U}_c$-admitting.
\end{proof}

\section{The $\mathcal{U}_c$-completion of $T_0$ space}

In \cite{Li2017,Shen2019,gierz2003,Miao2024}, the \(D\)-completion, well-filterification, and soberification of \(T_0\) spaces were discussed. In \cite{LQ2025}, we have seen the reflection construction of \(\mathcal{U}_s\)-admitting spaces, which is in fact the \(\mathcal{U}_s\)-completion of \(T_0\) spaces. Naturally, we wonder what the \(\mathcal{U}_c\)-completion of a \(T_0\) space is.

\begin{definition}(\cite{Li2017,Shen2019,gierz2003})
    A \(D\)-completion (respectively, well-filterification, or soberification, or $\mathcal{U}_s$-complection) of a \(T_0\) space \(X\) consists of a \(d\)-space (respectively, well-filtered space, or sober space, or $\mathcal{U}_s$-admitting space) \(Y\) and a continuous map \(\eta: X \to Y\) which possesses the following universal property: for every continuous map \(f: X \to Z\) to a \(d\)-space (respectively, well-filtered space, or sober space, or $\mathcal{U}_s$-admitting space) \(Z\), there is a unique continuous map \(f^*: Y \to Z\) such that \(f = f^* \circ \eta\). That is to say, the following diagram is commutative.
\end{definition}
\[
\begin{tikzcd}[column sep=1.5cm, row sep=1.5cm]
X \ar[r, "\eta"] \ar[dr, "f"'] & Y \ar[d, "\exists \text{!}f^*"] \\
& Z
\end{tikzcd}
\]

\begin{definition}(\cite{Li2017,Xu2020,Keimel2009})
    A subset \(A\) of a \(T_0\) space \(X\) is called tapered (respectively, a well-filtered determined set, or $\mathcal{U}_s$-admitting determined set) if for any continuous mapping \(f: X \to Y\) to a \(d\)-space (respectively, well-filtered space, or $\mathcal{U}_s$-admitting space) \(Y\), there exists a unique \(y_A \in Y\) such that
\[
\overline{f(A)} = \overline{\{y_A\}}.
\]
The set of all closed tapered sets (respectively, closed well-filtered determined subsets, or $\mathcal{U}_s$-admitting determined set) of \(X\) is denoted by \(TP(X)\) (respectively, \(WD(X)\), or $USD(X)$).
\end{definition}

\begin{lemma}(\cite{Li2017,Xu2020,Keimel2009})
    Let \( X \) be a \( T_0 \) space. Let the family
\[
X^s = \{ A \subseteq X : A \in S(X) \}
\]
be topologized by the open sets
\[
U^d = \{ A \in X^s : A \cap U \neq \emptyset \}
\]
for each open subset \( U \) of \( X \). Define
\[
\eta: X \to X^s, \quad \eta(x) = \overline{\{x\}}.
\]
When \( S(X) = TP(X) \), the above construction gives the \( D \)-completion of \( X \); when \( S(X) = WD(X) \), it gives the well-filterification of \( X \); when \( S(X) = Irr(X) \), it gives the soberification of \( X \), and when \( S(X) = USD(X) \), it gives the  $\mathcal{U}_s$-complection of \( X \).
\end{lemma}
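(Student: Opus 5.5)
The plan is to give one uniform argument for all four cases. We read each case through the H-sober framework of Section 4. Let $H$ be the relevant $R$-subset system: $\mathcal{D}$, $\mathcal{KF}$, $\mathcal{R}$ or $\mathcal{U}_s$. Then $S(X)$ is exactly the family of closed sets $A$ such that $\overline{f(A)}$ is the closure of a point for every continuous $f:X\to Y$ into an $H$-sober space $Y$. First I record basic facts about $S(X)$. Every $\overline{\{x\}}$ lies in $S(X)$, since $\overline{f(\overline{\{x\}})}=\overline{\{f(x)\}}$. Every member of $S(X)$ is irreducible. To see this, apply the definition to the soberification, which is sober and hence $H$-sober, and use that the closure of a point is irreducible. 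Next, $X^s$ carries the lower Vietoris topology, whose specialization order is inclusion, so $X^s$ is $T_0$. Finally, $\eta^{-1}(U^d)=U$, so $\eta$ is continuous, and indeed an embedding.

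The second step is the universal property. Given $f:X\to Z$ with $Z$ $H$-sober, define $f^*(A)=y_A$, where $\overline{f(A)}=\overline{\{y_A\}}$. The point $y_A$ is unique because $Z$ is $T_0$. Then $f^*\circ\eta=f$ is immediate. For continuity, I check that $(f^*)^{-1}(V)=(f^{-1}(V))^d$ for $V\in\mathcal{O}(Z)$. This holds because $y_A\in V$ iff $\overline{f(A)}\cap V\neq\emptyset$ iff $A\cap f^{-1}(V)\neq\emptyset$. For uniqueness, suppose $g:X^s\to Z$ is continuous with $g\circ\eta=f$. Each $A\in X^s$ is the closure in $X^s$ of $\eta(A)$, since $A\in U^d$ iff $\eta(A)\cap U^d\neq\emptyset$. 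Hence $\overline{\{g(A)\}}=\overline{g(\eta(A))}=\overline{f(A)}=\overline{\{f^*(A)\}}$, and $T_0$ gives $g=f^*$.

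The main obstacle is showing that $X^s$ is itself $H$-sober. Take $\mathcal{A}\in H(X^s)$ and put $A=\overline{\bigcup\mathcal{A}}\subseteq X$. I first show $A\in S(X)$. Let $f:X\to Y$ be continuous with $Y$ $H$-sober. By the second step, $f^*:X^s\to Y$ is continuous, so the subset-system axiom gives $f^*(\mathcal{A})\in H(Y)$. Hence $\overline{f^*(\mathcal{A})}=\overline{\{y\}}$ for some $y\in Y$. One then checks that $\overline{f(A)}=\overline{f^*(\mathcal{A})}$: both closures are the closure of $\bigcup_{B\in\mathcal{A}}f(B)$, because each $\overline{f(B)}=\overline{\{f^*(B)\}}$. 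So $A\in S(X)$. It remains to show that the closure of $\mathcal{A}$ in $X^s$ equals the closure of $\{A\}$. Every $B\in\mathcal{A}$ satisfies $B\subseteq A$, so $\mathcal{A}\subseteq\mathord{\downarrow}A$. Conversely, if $A\in U^d$ then $U\cap\bigcup\mathcal{A}\neq\emptyset$, so $\mathcal{A}\cap U^d\neq\emptyset$; hence $A\in\overline{\mathcal{A}}$. Therefore $X^s$ is $H$-sober. Combined with the first two steps, this yields the completion in each of the four cases.
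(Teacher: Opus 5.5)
Your proof is correct. The paper does not prove this lemma; it only cites \cite{Li2017,Xu2020,Keimel2009}. Its proof of Theorem 5.5 (the $\mathcal{U}_c$-case) uses the same template as yours: $\eta(x)=\mathord{\downarrow}x$, $f^*(A)=y_A$, continuity from $(f^*)^{-1}(V)=\Diamond f^{-1}(V)$, and sobriety of $X^s$ by identifying the closure of an $H$-set of $X^s$ with the point closure of a determined closed subset of $X$. Where you depart from that proof, your version is the more careful one.
\begin{itemize}
\item You treat all four cases at once through the $R$-subset systems $\mathcal{D},\mathcal{KF},\mathcal{R},\mathcal{U}_s$, whereas the paper handles one case.
\item For uniqueness, the paper argues that $f^*$ and $g$ agree on the dense set $\eta(X)$ and hence everywhere. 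That inference fails for $T_0$ targets: on the Sierpi\'nski space, the identity and the constant map with value $1$ agree on the dense point $1$. Your identity $\overline{\{A\}}=\overline{\eta(A)}$ in $X^s$, which yields $\overline{\{g(A)\}}=\overline{f(A)}=\overline{\{f^*(A)\}}$, is exactly what is needed.
\item Your check that members of $S(X)$ are irreducible gives $U^d\cap V^d=(U\cap V)^d$. So the sets $U^d$ are all the open sets, and your closure computations, which only test such sets, are legitimate. The paper relies on this without proof when it writes every closed set of $Y$ as $Y\setminus\Diamond U=\Box C$.
\item Working with $\overline{\bigcup\mathcal{A}}$ for an arbitrary $\mathcal{A}\in H(X^s)$, rather than with the representation $\Box C$ of a closed one, is only a cosmetic difference, since $\bigcup\Box C=C$.
\end{itemize}

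Two small points concern the soberification case.
\begin{itemize}
\item $Irr(X)$ in the statement must mean the \emph{closed} irreducible sets; otherwise $A$ and $\overline{A}$ would be indistinguishable points of $X^s$. You should also state that these are exactly your $\mathcal{R}$-determined closed sets. The inclusion you do not mention is immediate, since continuous images of irreducible sets are irreducible.
\item Deriving irreducibility by mapping into the soberification is circular when $H=\mathcal{R}$. Map instead into $P_H(X)$ via $x\mapsto\mathord{\downarrow}x$: that space is sober by the preliminaries, and the preimage of $\Diamond U$ is $U$. Your argument then goes through unchanged.
\end{itemize}
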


Likewise, next, we define the \(\mathcal{U}_c\)-completion of a \(T_0\) space and give its standard construction.
\begin{definition}
    Let \( X \) be a \( T_0 \) space and \( A \subseteq X \). The set \( A \) is called a \emph{\(\mathcal{U}_c\)-admitting determined set} (shortly, a \emph{\(UCD\)-set}) if for every \(\mathcal{U}_c\)-admitting space \( Y \) and every continuous map \( f: X \to Y \), there exists a unique \( y_A \in Y \) such that
\[
\overline{f(A)} = \overline{\{y_A\}}.
\]
The set of all closed \(\mathcal{U}_c\)-admitting determined subsets of \(X\) is denoted by \(UCD(X)\).
\end{definition}

\begin{definition}
     A $\mathcal{U}_c$-complection of a \(T_0\) space \(X\) consists of a $\mathcal{U}_c$-admitting space \(Y\) and a continuous map \(\eta: X \to Y\) which has the following universal property: for every continuous map \(f: X \to Z\) to a $\mathcal{U}_c$-admitting space \(Z\), there is a unique continuous map \(f^*: Y \to Z\) such that \(f = f^* \circ \eta\). That is to say, the following diagram is commutative.
\end{definition}
\[
\begin{tikzcd}[column sep=1.5cm, row sep=1.5cm]
X \ar[r, "\eta"] \ar[dr, "f"'] & Y \ar[d, "\exists \text{!}f^*"] \\
& Z
\end{tikzcd}
\]

Now we give the standard \(\mathcal{U}_c\)-completion of a \(T_0\) space \(X\) below.

\begin{theorem}
     Let \( X \) be a \( T_0 \) space. Let the family
\[
Y = \{ A \subseteq X : A \in UCD(X) \}
\]
be topologized by the open sets
\[
\Diamond U = \{ A \in Y : A \cap U \neq \emptyset \}
\]
for each open subset \( U \) of \( X \). Define
\[
\eta: X \to Y, \quad \eta(x) = \overline{\{x\}}=\mathord{\downarrow} x.
\]
This topology together with the map constitutes the standard \(\mathcal{U}_c\)-completion of \(X\).
\end{theorem}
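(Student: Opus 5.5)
The proof splits into four checks: (a) $\eta$ is well defined and a topological embedding; (b) $Y$ is $\mathcal{U}_c$-admitting; (c) every continuous $f\colon X\to Z$ into a $\mathcal{U}_c$-admitting $Z$ extends along $\eta$; (d) the extension is unique.

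\textbf{Step (a).} Every point closure $\mathord{\downarrow}x$ lies in $UCD(X)$, because $\overline{f(\mathord{\downarrow}x)}=\overline{\{f(x)\}}$. Hence $\eta$ maps into $Y$. Since $\eta^{-1}(\Diamond U)=U$, the map $\eta$ is continuous and open onto its image. It is injective by $T_0$.

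The family $\{\Diamond U\}$ is closed under unions, and $\Diamond U\cap\Diamond V=\Diamond(U\cap V)$ holds because every member of $Y$ is irreducible. Indeed, $\mathcal{U}_c$-admitting spaces include all sober spaces, so mapping into the soberification of $X$ shows each $UCD$-set is irreducible. Thus the family is a topology. The specialization order on $Y$ is inclusion, so $\overline{\{A\}}=\{B\in Y: B\subseteq A\}$.

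\textbf{Step (c): extension.} Given continuous $f\colon X\to Z$ with $Z$ $\mathcal{U}_c$-admitting, set $f^*(A)=y_A$, the unique point with $\overline{f(A)}=\overline{\{y_A\}}$. Then $f^*\circ\eta=f$. For continuity, $(f^*)^{-1}(V)=\Diamond f^{-1}(V)$, since $y_A\in V$ iff $\overline{f(A)}\cap V\neq\emptyset$ iff $A\cap f^{-1}(V)\neq\emptyset$.

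\textbf{Step (d): uniqueness.} Suppose $g\colon Y\to Z$ is continuous with $g\circ\eta=f$. For $A\in Y$, the set $\eta(A)$ is dense in $\overline{\{A\}}$ within $Y$: any $\Diamond U$ containing some $B\subseteq A$ meets $A\cap U$, so it meets $\eta(A)$. Hence $g(A)\in\overline{g(\eta(A))}=\overline{f(A)}$. Conversely $f(A)=g(\eta(A))\subseteq\mathord{\downarrow}g(A)$, because $\eta(a)\subseteq A$ for $a\in A$. Therefore $\overline{\{g(A)\}}=\overline{f(A)}$, and $T_0$ gives $g(A)=f^*(A)$.

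\textbf{Step (b): $Y$ is $\mathcal{U}_c$-admitting.} This is the main obstacle. I would follow the pattern used for well-filterification and $\mathcal{U}_s$-completion. Let $\mathcal{A}\subseteq Y$ be a $\mathcal{U}_c$-set, and put $A=\overline{\bigcup\mathcal{A}}\subseteq X$. I need to show $A\in UCD(X)$; then $A$ is the largest element of $\overline{\mathcal{A}}$, so $\overline{\mathcal{A}}=\overline{\{A\}}$.

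Take any continuous $f\colon X\to Z$ with $Z$ $\mathcal{U}_c$-admitting. The map $f^*\colon Y\to Z$ from step (c) is continuous. By Proposition 3.12, $f^*(\mathcal{A})$ is a $\mathcal{U}_c$-set in $Z$, so $\overline{f^*(\mathcal{A})}=\overline{\{z\}}$ for a unique $z$. It remains to check $\overline{f(A)}=\overline{f^*(\mathcal{A})}$. Both equal $\overline{\bigcup_{B\in\mathcal{A}}f(B)}$, because $\overline{\{f^*(B)\}}=\overline{f(B)}$ for each $B$. So $\overline{f(A)}=\overline{\{z\}}$ with $z$ unique by $T_0$, and $A$ is a $UCD$-set.

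The delicate point is that Proposition 3.12 is applied to $f^*$ rather than to $f$. This requires the continuity of $f^*$ established in step (c), so step (c) must be done before step (b).
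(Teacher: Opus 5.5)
Your proposal is correct and follows essentially the same route as the paper. It uses the same definition $f^*(A)=y_A$, the same identity $(f^*)^{-1}(V)=\Diamond f^{-1}(V)$, and the same key step: apply Proposition 3.12 to $f^*$ (not $f$) to show that $\overline{\bigcup\mathcal{A}}$, which is the paper's closed set $C$ with $\mathcal{A}=\Box C$, is a $UCD$-set. Two of your steps are in fact more careful than the paper's: you check that $UCD$-sets are irreducible, so the sets $\Diamond U$ really form a topology; and your uniqueness argument via $\overline{\{g(A)\}}=\overline{g(\eta(A))}=\overline{f(A)}$ is valid, whereas the paper's ``agree on the dense set $\eta(X)$, hence everywhere by continuity'' fails for non-Hausdorff targets.
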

\begin{proof}
    Claim 1: \(\eta\) is continuous.
 
In fact, for any open set \(U \subseteq X\),
$\eta^{-1}(\Diamond U) = \{ x \in X : \mathord{\downarrow} x \cap U \neq \emptyset \} = \{ x \in X : x \in U \} = U$.

    For every continuous map \(f: X \to Z\) to a $\mathcal{U}_c$-admitting space \(Z\), since \(A\) is a UCD-set, we have \(\overline{f(A)} = \overline{\{y_A\}}\). Define \(f^*: Y \to Z\) by \(f^*(A) = y_A\). 
 
 Cliam 2: \(f^*\) is the unique continuous map satisfying \(f^* \circ \eta = f\).

For any \(x \in X\), since \(f\) is continuous, we have
$\overline{f(\mathord{\downarrow} x)}$ = $\overline{f(x)}.$
Hence, $f^* \circ \eta(x) = f^*(\mathord{\downarrow} x) = f(x)$.
That is to say, \(f^* \circ \eta = f\).

For each \( V \in \mathcal{O}(Y) \),
\begin{align*}
(f^*)^{-1}(V)
&= \{ A \in \mathrm{UCD}(X) : f^*(A) \in V \} \\
&= \{ A \in \mathrm{UCD}(X) : \overline{f^*(A)} \cap V \neq \emptyset \} \\
&= \{ A \in \mathrm{UCD}(X) : \overline{f(A)} \cap V \neq \emptyset \} \\
&= \{ A \in \mathrm{UCD}(X) : f(A) \cap V \neq \emptyset \} \\
&= \{ A \in \mathrm{UCD}(X) : A \cap f^{-1}(V) \neq \emptyset \} \\
&= \Diamond f^{-1}(V).
\end{align*}
As $f^{-1}(V)$ is open in $X$,  $\Diamond f^{-1}(V)$ is open in Y. So $f^*$ is continuous.

In \(Y\), let $A$ be a UCD-set and let \(\Diamond U\) be an open set containing $A$, i.e., \(A \cap U \neq \emptyset\). Then there exists \(x \in X\) such that \(\mathord{\downarrow} x \in \Diamond U\). It implies that \(\overline{\eta(X)} = Y\), i.e., \(\eta(X)\) is dense in \(Y\).
Now suppose there exists a continuous map \(g: Y \to Z\) such that \(g \circ \eta = f\). Then for any \(x \in X\), $f(x) = g \circ \eta(x) = g(\mathord{\downarrow} x)$.
Since $\overline{f(\mathord{\downarrow} x)}$ = $\overline{f(x)}$, we have $f^*(\mathord{\downarrow} x) = f(x) = g(\mathord{\downarrow} x)$.
Thus \(f^*\) and \(g\) agree on the dense subset \(\eta(X)\) of \(Y\). By continuity, they must agree on all of \(Y\). As a result, \(g = f^*\). This proves the uniqueness of \(f^*\).

Claim 3: \( Y \) is a \(\mathcal{U}_c\)-admitting space.
 
 For any closed \(\mathcal{U}_c\)-set \(\mathcal{A} \subseteq Y\), there exists a closed set \( C \subseteq X \) such that
\[
\mathcal{A} = \square C = \{ A \in \mathrm{UCD}(X) : A \subseteq C \}.
\]
Because \(\mathcal{A}\) is closed, there exists an open set \( U \subseteq X \) such that \( Y \setminus \mathcal{A} = \Diamond U \). Let \( C = X \setminus U \). Then it is easy to verify that \(\mathcal{A} = \square C\). We claim that \( C \) is a UCD-set. To this end, let $f:X\to Z$ be any continuous map to a $\mathcal{U}_c$-admitting space $Z$. By Claim 2, there exists a unique continuous map $f^*:Y\to Z$ such that $f^*\circ \eta=f$.

Since $\mathcal{A}$ is a $\mathcal{U}_c$-set in $Y$ and $f^*$ is continuous, it follows from Proposition 3.12 that $f^*(\mathcal{A})$ is a closed $\mathcal{U}_c$-set in $Z$. As $Z$ is $\mathcal{U}_c$-admitting, every closed $\mathcal{U}_c$-set is the closure of a singleton. Thus there exists some $z_0\in Z$ such that
\[
\overline{f^*(\mathcal{A})}=\overline{\{z_0\}}.
\]

On the other hand,
\[
\overline{f^*(\mathcal{A})}=\overline{f^*(\Box C)}=\overline{f^*(\overline{\eta(C)})}=\overline{f^*(\eta(C))}=\overline{f(C)}.
\]
Therefore,
\[
\overline{f(C)}=\overline{\{z_0\}}.
\]
Hence, $C$ is a $\mathcal{U}_c$-admitting determined set, i.e., $C\in UCD(X)=Y$.

Consequently,
\[
\mathcal{A}=\Box C=\overline{\{C\}},
\]
where the last equality holds because in space Y, $\overline{\{C\}}=\{A\in UCD(X):A\subseteq C\}=\Box C$ whenever $C\in UCD(X)$.

Thus every closed $\mathcal{U}_c$-set in $Y$ is the closure of a singleton, i.e., $Y$ is a $\mathcal{U}_c$-admitting space.
\end{proof}

\bibliographystyle{elsarticle-num}
\bibliography{refs}

\end{document}